\title{Escape from compact sets of normal curves in Carnot groups}
 \author[Le Donne]{Enrico Le Donne}
\address{E. Le Donne;
University of Fribourg, Chemin du Mus\'ee~23, 1700 Fribourg, Switzerland \& Department of Mathematics and Statistics \\
P.O. Box 35, FI-40014 \\
University of Jyv\"askyl\"a, Finland
\\
\href{enrico.ledonne@unifr.ch}{enrico.ledonne@unifr.ch}}
	\author[Paddeu]{Nicola Paddeu}
 \address{N. Paddeu;
 {University of Fribourg, Chemin du Mus\'ee~23, 1700 Fribourg, Switzerland  
 \\
 \href{nicola.paddeu@unifr.ch}{nicola.paddeu@unifr.ch}}
}
\date{6 April 2023}
\pgfplotsset{width=11cm,compat=1.9}
\newtheorem{teo}{Theorem}[section]
\newtheorem{defi}[teo]{Definition}
\newtheorem{prop}[teo]{Proposition}
\newtheorem{cor}[teo]{Corollary}
\newtheorem{lemma}[teo]{Lemma}
\newtheorem{oss}[teo]{Remark}
\newtheorem{Example}[teo]{Example}
\theoremstyle{plain}
\numberwithin{figure}{section}
\DeclareMathOperator{\Span}{Span}
\DeclareMathOperator{\Vecc}{Vec}
\DeclareMathOperator{\Ad}{Ad}
\DeclareMathOperator{\End}{End}
\DeclareMathOperator{\Lie}{Lie}
\DeclareMathOperator{\E}{E}
\newcommand{\df}{\mathrm{d}}
\newcommand{\rr}{\rightarrow}
\DeclareRobustCommand*{\mfaktor}[3][]
{
   { \mathpalette{\mfaktor@impl@}{{#1}{#2}{#3}} }
}
\newcommand*{\mfaktor@impl@}[2]{\mfaktor@impl#1#2}
\newcommand*{\mfaktor@impl}[4]{
   \settoheight{\faktor@zaehlerhoehe}{\ensuremath{#1#2{#3}}}%
   \settoheight{\faktor@nennerhoehe}{\ensuremath{#1#2{#4}}}%
      \raisebox{-0.5\faktor@zaehlerhoehe}{\ensuremath{#1#2{#3}}}%
      \mkern-4mu\diagdown\mkern-5mu%
      \raisebox{0.5\faktor@nennerhoehe}{\ensuremath{#1#2{#4}}}%
}
\begin{document}
\maketitle
\begin{abstract}
In the setting of subFinsler Carnot groups, 
    we consider curves  that satisfy the normal equation coming from the Pontryagin Maximum Principle.
    We show that, unless it is constant, each such a curve leaves every compact set, quantitatively. Namely, the distance between the points at time 0 and time $t$ grows at least of the order of  $t^{1/s}$, where $s$ denotes the step of the Carnot group. In particular, in subFinsler Carnot groups there are no periodic normal geodesics.
\end{abstract}
\tableofcontents

\section{Introduction}


This paper originates from the following question: 
can subRiemannian manifolds have arbitrarily short geodesic loops?
Important subRiemannian manifolds are Carnot groups, which have dilation structures and therefore are self-similar. Hence, for these spaces, the question rephrases as:
do  geodesic loops exist in a Carnot group? 
We remark that not only geodesic loops do not exist in the Euclidean spaces, but also do not exist in normed vector spaces with strictly convex norm. Finite-dimensional normed vector spaces are exactly the Carnot groups of nilpotency step 1, equipped with geodesic distances. 
We refer to \cite{le2015metric} and \cite{le2018primer} for an introduction to Carnot groups and the fact that when equipped with their Carnot-Carath\'eodory distances they are exactly the metric spaces that are homogeneous, locally compact, geodesic, and admit dilations.\\
Some difficulties in Carnot groups are that the distance function may not be convex, that at every scale there exist pairs of points joined by more than one geodesic, and geodesics may not be globally length-minimizing. 
However, the biggest difficulty, as in other subRiemannian problems, is the presence of geodesics that are singular points of the endpoint map, which we call the {\em abnormal geodesics}.
By the Pontryagin Maximum Principle we know that if a geodesic is not abnormal, then it satisfies a geodesic equation called the {\em normal equation}. The curves satisfying the normal equation, called {\em normal curves} (or \emph{normal trajectories}), are more manageable. In this paper we focus on normal curves and we prove that, except for the constant ones, they cannot form loops and actually they  leave every compact set in a quantitative way.
The main result of this paper is the following statement.

\begin{teo}[Normal curves leave compact sets]
Let $G$ be a subFinsler Carnot group of step $s$. 
Fix a norm $N$ on the space of right-invariant co-vectors. Then, there exists a constant $\epsilon>0$ such that for every normal curve $\gamma:\mathbb{R}\rr G$ parametrized by arc-length there holds  
\begin{equation}
    d(\gamma(t),\gamma(t'))\geq \frac{\epsilon}{N(\lambda)^{\frac{1}{s}}}|t-t'|^{\frac{1}{s}}-1, \ \ \forall t,t'\in\mathbb{R},
    \label{e notinball}
\end{equation}
for every co-vector $\lambda$ associated to $\gamma$, see Definition~\ref{d right-inv-covector}.
\label{t fugacptquant}
\end{teo}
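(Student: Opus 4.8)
The plan is to combine the self-similar structure of $G$ with a compactness argument, reducing the quantitative estimate to a single qualitative fact: a non-constant normal curve cannot stay in a compact set. First I would normalize: by left-invariance of $d$ it suffices to bound $d(\gamma(0),\gamma(t))$ from below (taking $t'=0$ and $\gamma(0)=e$), and then record how the associated co-vector behaves under the scaling symmetry. Writing $\lambda=\sum_{i=1}^{s}\lambda_i$ with $\lambda_i\in V_i^*$ according to the stratification $\mathfrak g=V_1\oplus\dots\oplus V_s$, a direct computation with the normal equation shows that $\gamma_\tau(t):=\delta_\tau(\gamma(t/\tau))$ is again a unit-speed normal curve and that its associated co-vector is $\lambda_\tau=\sum_i \tau^{1-i}\lambda_i$; in particular the horizontal part $\lambda_1$ is preserved (consistent with unit speed) while the top part scales by $\tau^{1-s}$. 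Since $d(\gamma_\tau(t),\gamma_\tau(t'))=\tau\,d(\gamma(t/\tau),\gamma(t'/\tau))$, choosing $\tau$ to push the rescaled co-vector into a fixed compact family of normalized co-vectors converts a uniform estimate on that family into the stated inequality; the additive $-1$ is the residue of the compact ``base scale,'' and both the exponent $1/s$ and the factor $N(\lambda)^{-1/s}$ come out precisely because the top layer carries the highest weight $s$ and is therefore the bottleneck for escape.

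With the reduction in hand, the heart of the matter is the uniform escape estimate on the compact normalized family, which I would derive from the qualitative statement that a non-constant normal curve is not contained in any compact set. I would set up the normal equation in the left-trivialized (Lie--Poisson) form $\dot p=\ad^*_{u}p$ with $u=dH(p_1)\in V_1$, where $p(t)=\Ad^*_{\gamma(t)}\lambda$ and $H$ is the subFinsler Hamiltonian, so that the associated right-invariant co-vector $\lambda$ is exactly the conserved momentum. The stratification forces $\ad^*_u$ to lower the weight by one, so the system is triangular and driven by its conserved top component: $\dot p_s=0$, $\dot p_{s-1}=\ad^*_u\lambda_s$, and so on down to $\dot p_1=\ad^*_u p_2$. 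If $\gamma$ were confined to a compact set, then $\gamma(t)$, hence $p(t)=\Ad^*_{\gamma(t)}\lambda$ and every iterated time-integral of $u$ paired against $\lambda_s$, would remain bounded; I would argue that this rigidity forces $u$ to be constant, whence $\gamma$ is a horizontal one-parameter subgroup, which escapes, unless $u\equiv0$, i.e.\ $\gamma$ is constant. The model to keep in mind is the Heisenberg group, where a non-constant normal curve projects to a circle in $V_1$ while its $V_2$-coordinate, the swept area, grows linearly, giving $d\sim t^{1/2}=t^{1/s}$.

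To pass from ``non-constant implies non-confined'' to the uniform quantitative estimate I would run a contradiction-and-compactness argument: if no uniform $\epsilon$ worked, then after the normalization above I would extract a sequence of unit-speed normal curves, with co-vectors in the fixed compact family, that stay in a bounded region for diverging times, and pass to a locally uniform limit. The limit would be a non-constant unit-speed normal curve confined to a compact set, contradicting the qualitative core. This step requires knowing that the class of unit-speed normal curves is closed under local uniform limits with convergent co-vectors, and that a locally uniform limit of unit-speed curves is again unit speed, so that the limit is genuinely non-constant.

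The main obstacle is the qualitative core, and within it the oscillatory regime. When $u$ averages to zero the curve can hover in $V_1$, and one must show that the triangular coadjoint dynamics nevertheless pumps growth into some higher layer — the general-step analogue of the Heisenberg area. Making this quantitative and uniform, while the co-vector is allowed to approach the abnormal locus $\{\lambda_1=0\}$, where the escape rate legitimately degrades (exactly what the factor $N(\lambda)^{-1/s}$ encodes), is the delicate point; it is the normalization to a compact family, which keeps $\lambda_1$ bounded away from $0$, that makes a uniform estimate possible. A secondary technical difficulty is the closedness of normal curves under limits when the Finsler norm is merely convex and not smooth, so that $u=dH(p_1)$ is defined only almost everywhere and the Legendre transform may be set-valued; this must be handled with care when passing to the limit.
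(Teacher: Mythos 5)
Your reduction leaves the actual theorem unproved at its core. Everything in your plan funnels through the qualitative claim that a non-constant normal curve cannot stay in a compact set, but for that claim you offer only the sentence ``I would argue that this rigidity forces $u$ to be constant'' --- and that sentence \emph{is} the theorem. The mechanism you suggest (boundedness of the momentum $p(t)=\lambda\circ\Ad_{\gamma(t)}$ and of iterated time-integrals of $u$ paired against $\lambda_s$) does not obviously yield it: in the Heisenberg group $p(t)$ is bounded along \emph{every} normal curve (it precesses on a circle, with $p_2$ conserved and $|p_1|$ constant), so boundedness of the momentum carries no information; the information would have to come from boundedness of $\gamma$ itself, and converting that into a contradiction at arbitrary step is precisely the hard point, for which you give no argument. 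Note also that the paper's logic runs in the opposite direction to yours: it proves the quantitative estimate \eqref{e notinball} \emph{directly} and obtains the qualitative statements (no normal loops, escape from compact sets) as corollaries. Its entire proof is the chain
\begin{equation*}
t \;=\; \lambda\left(\df\End_{u_t}u_t\right)\;=\;\lambda\left(\vd(\gamma(t))\right)\;\leq\; C\,N(\lambda)\,\max\left(d(1,\gamma(t)),\,d(1,\gamma(t))^s\right),
\end{equation*}
where the first equality is the normal equation tested on the control itself (Proposition~\ref{p extremalsLie} with Remark~\ref{o reparametrizing}, using unit speed), the second comes from differentiating $\End(\tau u_t)=\delta_\tau(\End(u_t))$ at $\tau=1$, and the inequality is Lemma~\ref{l stimalambdasudelta}, i.e.\ the fact that $\vd=\sum_i P_iX_i^\dagger$ with $P_i$ homogeneous of degree $d_i\leq s$. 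No compactness, no limits of curves, no qualitative input.

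Independently, your compactness bootstrap is broken in two places. First, the negation of \eqref{e notinball} produces, for each $n$, only a \emph{single pair} of times at which $\gamma_n$ returns close to its starting point relative to $|t_n|^{1/s}$; it does not confine $\gamma_n$ on $[0,t_n]$ (a unit-speed curve can travel out to distance of order $t_n/2$ and come back), so your locally uniform limit need not be trapped in any compact set, and no contradiction with the qualitative core arises. Second, even granting a uniform escape statement on a compact normalized family, compactness yields for each radius $R$ only \emph{some} finite exit time $T(R)$; it cannot by itself produce the polynomial rate $T(R)\lesssim R^s$, and the natural bootstrap --- restart the curve each time it doubles its distance and rescale back into the family --- fails because restarting at time $t_1$ replaces the relevant co-vector data by $\lambda\circ\Ad_{\gamma(t_1)}$, whose higher-degree components grow polynomially in $\gamma(t_1)$ and push you out of the normalized family; controlling exactly that growth is again the content of Lemma~\ref{l stimalambdasudelta}, so the detour buys nothing. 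For the record, your scaling computation $\lambda_\tau=\sum_i\tau^{1-i}\lambda_i$ is correct and consistent with Remark~\ref{o reparametrizing} and Definition~\ref{d right-inv-covector} (though note the edge case $\lambda=\lambda_1$, which your normalization cannot move), and the subFinsler limit issues you flag (speed dropping under weak limits, set-valued Legendre transform) are real but secondary: the paper's sub-differential formulation sidesteps them entirely because its proof never takes a limit of normal curves.
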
 



As a consequence 
we rule out the presence of some geodesic loops.
\begin{cor}
In subFinsler Carnot groups normal loops are constant. 
\label{c noperiodic0}
\end{cor}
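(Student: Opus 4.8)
The plan is to obtain Corollary~\ref{c noperiodic0} as an immediate consequence of Theorem~\ref{t fugacptquant}, by contrasting the boundedness forced by periodicity with the unbounded lower bound in \eqref{e notinball}. I read a \emph{normal loop} as a periodic normal curve, i.e.\ a normal curve $\gamma:\mathbb{R}\rr G$ for which there is $T>0$ with $\gamma(t+T)=\gamma(t)$ for all $t\in\mathbb{R}$; this is the sense in which the abstract asserts the absence of periodic normal geodesics.

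Arguing by contradiction, I would assume $\gamma$ is a non-constant normal loop. Since it is non-constant it has constant positive speed, so after rescaling time by a constant we may take it to be parametrized by arc-length; this keeps it a normal curve and keeps it periodic, the new period being the length of one loop, which I relabel $T$. Being non-constant, $\gamma$ carries a nonzero associated co-vector $\lambda$ (Definition~\ref{d right-inv-covector}), so that $c:=N(\lambda)$ is a finite, strictly positive number. Now I apply \eqref{e notinball} with $t'=0$ and $t=kT$ for $k\in\mathbb{N}$: by periodicity $\gamma(kT)=\gamma(0)$, whence the left-hand side $d(\gamma(kT),\gamma(0))$ vanishes, while the right-hand side equals $\frac{\epsilon}{c^{1/s}}(kT)^{1/s}-1$. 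Letting $k\to\infty$ the right-hand side diverges to $+\infty$, which is the desired contradiction. Hence no non-constant normal loop exists, i.e.\ every normal loop is constant.

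Since Theorem~\ref{t fugacptquant} does all of the analytic work, the corollary is essentially a formality, and the only points that require attention are definitional: that a normal loop is genuinely periodic, so that \eqref{e notinball} may be evaluated at arbitrarily large $|t-t'|$; that passing to the arc-length parametrization keeps $\gamma$ a normal curve; and that a non-constant normal curve carries a nonzero co-vector, so that $N(\lambda)>0$ makes the coefficient in \eqref{e notinball} a genuine positive constant. I expect this small amount of bookkeeping, rather than any real difficulty, to be the only thing to check.
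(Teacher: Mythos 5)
Your deduction from Theorem~\ref{t fugacptquant} is valid, but it is a genuinely different route from the paper's own proof, and it is worth seeing what each buys. The paper does \emph{not} obtain Corollary~\ref{c noperiodic0} as a formal consequence of the quantitative theorem (that reading is suggested only by the abstract's remark about periodic normal geodesics). Instead it proves the stronger Theorem~\ref{c noperiodic}: normal loops based at the origin in any \emph{self-similar space} $\mfaktor{H}{G}$ are constant. The mechanism there is soft and needs no growth estimate: by Lemma~\ref{l lift} the loop lifts to a normal curve $\Tilde{\gamma}$ in $G$ whose associated covector $\Tilde{\lambda}$ annihilates $\ker\df\pi$; since $\Tilde{\gamma}(1)\in H$ and $H$ is dilation-invariant, the dilation trick \eqref{formula_end_d} gives $\df\End_u u\in T_{\Tilde{\gamma}(1)}H$, whence $||u||_{L^2}^2=\Tilde{\lambda}(\df\End_u u)=0$ by \eqref{e beingextremalLie1}. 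In the Carnot case ($H=\{1_G\}$) this collapses to the two-line argument of the introduction: a loop forces $\df\End_u u=0$ outright. Your approach gets the Carnot statement for free once Theorem~\ref{t fugacptquant} is available, but it cannot reach the self-similar generalization, since the quantitative theorem lives in the group; the paper's argument also avoids the two parametrization issues discussed next.

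Those two items, which you defer to ``bookkeeping,'' are exactly where the remaining content sits, and you should write them out. First, a normal loop in the paper's sense is a normal curve $\gamma:[0,1]\rr G$ with $\gamma(0)=\gamma(1)$, and passing to a periodic normal curve on $\mathbb{R}$ is not automatic from Definition~\ref{d extremals}: concatenations of normal curves are in general not normal. It is true here, and the clean way to see it is through the right-invariant covector formulation: the loop has an associated covector $\lambda$ with $\lambda\circ\Ad_{\gamma(t)}\in\partial_{u(t)}\E_{1}$ a.e.\ on $[0,1]$ (Proposition~\ref{p extremalsLie}), and since both $\gamma$ and its control $u$ are literally periodic under the extension, condition \eqref{e normalLie2} holds a.e.\ on all of $\mathbb{R}$ with the \emph{same} $\lambda$; the equivalences in the proof of Proposition~\ref{p extremalsLie} together with Remark~\ref{o reparametrizing} then make every compact restriction normal, as Definition~\ref{d nextremals} requires. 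Second, Theorem~\ref{t fugacptquant} is stated only for arc-length-parametrized curves, so your rescaling step needs normal curves to have constant speed; this is true (it is the constancy of the maximized Hamiltonian along Pontryagin extremals, equivalently of $t\mapsto\frac{1}{2}||u(t)||^2$), but it is nowhere stated or proved in the paper, so you must cite or prove it. With these two points supplied, the rest is as you say: any associated covector of a non-constant normal curve is nonzero, since $0\in\partial_{u(t)}\E_{1}$ forces $u(t)=0$, so $N(\lambda)>0$, and evaluating \eqref{e notinball} at $t'=0$, $t=kT$ with $k\to\infty$ yields the contradiction.
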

We stress that the above 
corollary has been know to be true in step 2 Carnot groups (by the complete integration of geodesics) and in jet spaces by the work of Bravo-Doddoli \cite{https://doi.org/10.48550/arxiv.2203.16178}.
Moreover, a similar question arose in geometric group theory of the large-scale geometry of nilpotent groups. In fact, because of the work of Hoda \cite{hodastronglyshortcutspaces}, it would be important to prove that Carnot groups cannot have isometric copies of the standard unit circle. The latter result is known for finite-dimensional normed spaces and was proven by Creutz, see \cite[Lemma 1.7]{creutz2021rigidity}.

\subsection{The basic case of loops in subRiemannian Carnot groups}
We give here a simpler proof of the non-existence of normal loops in subRiemannian Carnot groups. The same idea will be pushed to prove Corollary~\ref{c noperiodic0} and to show the quantitative result of Theorem~\ref{t fugacptquant}.

\begin{proof}[Proof of Corollary~\ref{c noperiodic0} for subRiemannian Carnot groups.]
Let $G$ be a subRiemannian Carnot group with left-invariant scalar product  $\langle\cdot,\cdot\rangle$.
Let $\gamma:[0,1]\rr G$ be 
a normal geodesic that makes a loop.
Without loss of generality we assume 
$\gamma(0)=\gamma(1)=1_G$. 

Since $\gamma$ is normal, by definition its control 
  $u$ satisfies the {\em normal equation}: for some 
  $\lambda\in (T_{\gamma(1)}G)^*$    and all controls $v$ we have
\begin{equation}
      \lambda(\df\End_u v)=\langle u ,v\rangle,
      \label{e pcor3}
\end{equation}
  where $\End$ is the end-point map from $1_G$, see for example \cite[Corollary 8.8]{agrachev2012introduction}. When $v=u$ we get
  \begin{equation}
      \lambda(\df\End_u u)=||u||_{L^2}^2.
      \tag{\ref{e pcor3}bis}
      \label{e 2bis}
  \end{equation}
 Now the idea is to consider the curve dilated by the Carnot dilations: for   $\tau>0$ let $\delta_\tau:G\to G $ be the dilation of factor $\tau$.
    On the one hand, we obviously have 
  $(\delta_\tau \circ \gamma )(1)=1_G$.
  On the other hand, the control of the curve $\delta_\tau \circ \gamma$ is $\tau u$.
  Hence
    we have that
 \begin{equation}
     \End(\tau u)=\delta_\tau(\End(u))=1_G,  \ \ \forall \tau>0.
     \label{e pcor1}
 \end{equation}
Equation (\ref{e pcor1}) implies that
\begin{equation}
    \df\End_u u=\frac{\df}{\df \epsilon} \End((1+\epsilon)u)\bigg|_{\epsilon=0}=0.
    \label{e pcor2}
\end{equation}
Consequently, equations (\ref{e pcor2}) and (\ref{e 2bis}) imply that $||u||_{L^2}=0$ and therefore $\gamma$ is the constant curve. 
\end{proof}

\subsection{The strategy of the proof}
We present here the key ideas that we shall use in the proof of Theorem~\ref{t fugacptquant}. We work in the context of subFinsler spaces, smooth manifolds equipped with a distribution, in which the distance between two points is given by the infimum of the lengths of paths tangent to the distribution joining the two points. The length is measured with respect to a continuously varying norm. \\
We start by writing the normal equation coming from the Pontryagin Maximum Principle in terms of the sub-differentials of the energy, see Proposition~\ref{p puntuale}. We use this formulation of the normal equation to get the analog of \eqref{e 2bis}, which expresses the energy of a normal control $u$ in terms of some co-vector $\lambda$ applied to $\df \End_u u$. 
We then use the properties of the one parameter subgroup of dilations $\delta$ to define the vector field
$$\Vec{\delta}(g):=\left.\frac{\df}{\df t}\delta_t(g)\right|_{t=1}, \forall g\in G.$$
The first key point is that we have $\Vec{\delta}(\End(u))=\df \End_uu$, see \eqref{e 1.6mainteo}. In particular we shall get 
\begin{equation}
     ||u||^2_{L^2}=\lambda(\Vec{\delta}(\End(u)). \label{e ideaproof}
\end{equation}
We show that $g \mapsto\lambda(\Vec{\delta}(g))$ is a finite sum of homogeneous functions of degree of homogeneity smaller than the step $s$, and therefore it can be bounded by $Cd(1,g)^s$ when $d(1,g)>1$, for some constant $C>0$. This bound together with \eqref{e ideaproof} will give \eqref{e notinball}.  
\subsection{Organization of the paper}
The paper is organized as follows: we dedicate Section~2 to a brief presentation of the notions of subFinsler geometry that we need in the paper. We recall the definitions of subFinsler manifolds, Carnot groups, and self-similar spaces; we define normal curves and we characterize them in terms of sub-differentials of the energy. In Section~3, we prove the main results of the paper. We also prove a slight generalization of Corollary~\ref{c noperiodic0} to self-similar spaces. Section~4 contains some examples: we show that length-$1$ normal geodesics can stay arbitrarily close to the starting point and that the exponent $\frac{1}{s}$ in Theorem~\ref{t fugacptquant} is the biggest possible. 

\textbf{Acknowledgments}. 
The authors
were partially supported by 
the Swiss National Science Foundation
(grant 200021-204501 `\emph{Regularity of sub-Riemannian geodesics and applications}')
and by the European Research Council  (ERC Starting Grant 713998 GeoMeG `\emph{Geometry of Metric Groups}').  
E.L.D was also partially supported by the Academy of Finland 
 (grant 322898
`\emph{Sub-Riemannian Geometry via Metric-geometry and Lie-group Theory}').

\section{Preliminaries}
\subsection{SubFinsler geometry and Carnot groups}
We start recalling some basic definitions and facts from subFinsler geometry following \cite{ledonne2010lecture} and \cite{antonelli2022lipschitz}.
\begin{defi}
Let $M$ be a smooth manifold. A \emph{distribution} on $M$ is a sub-bundle of the tangent bundle. We call a distribution \emph{bracket-generating} if, in some neighborhood of every point, the Lie algebra generated by vector fields tangent to the distribution contains a frame for the tangent bundle on that neighborhood. A \emph{subFinsler manifold} is a smooth manifold $M$, with a bracket-generating distrubution $\Delta$ and a continuously varying norm $||\cdot||$, equipped with the Carnot-Carathéodory distance
\begin{equation}
\begin{split}
        d_{cc}(x,y):=\inf\{\int_0^1||\dot{\gamma}(t)||\df t \ \big|\gamma:[0,1]\rr M \text{ absolutely continuous; }\\\dot{\gamma}(t)\in \Delta_{\gamma(t)},\text{ for a.e. } t\in[0,1]; \gamma(0)=x; \gamma(1)=y \}.
\end{split}
\label{e dcc}
\end{equation}
\end{defi}
The fact that $d_{cc}$ is a distance is guaranteed by Chow's Theorem (see for example \cite[Section 1]{gromov1996carnot}).
In this paper we focus mainly on Carnot groups: particular Lie groups with left-invariant distributions and norms.
\begin{defi}
A \emph{stratification} of a Lie algebra $\mathfrak{g}$ is a decomposition of $\mathfrak{g}$ as a direct sum $\mathfrak{g}=V_1\oplus...\oplus V_s$ with  
\begin{equation}
    [V_1,V_j]=V_{j+1}, \ \ \forall j\in\{1,...,s\},
\end{equation}
where $V_{s+1}:=\{0\}$. The sub-spaces $V_i$ are called \emph{strata} of the stratification. A Lie algebra equipped with a stratification is called \emph{stratified}.
A \emph{Carnot group} is a simply connected Lie group with stratified Lie algebra, with the left-invariant extension of the first stratum as distribution and a left-invariant norm, equipped with the Carnot-Carathéodory distance.
\end{defi}
Carnot groups are self-similar, in the sense that
there are natural automorphisms that act as homotheties.
\begin{defi}
Let $G$ be Carnot group with stratified Lie algebra $\mathfrak{g}=V_1\oplus...\oplus V_s$. The \emph{dilation} $\delta_\lambda:G\rr G$ of factor $\lambda$, with $\lambda\in \mathbb{R}$, is the Lie group automorphism defined setting
\begin{equation}
    (\delta_\lambda)_*(v)=\lambda^iv, \ \ ,\forall i\in\{1,...,s\}, \  \forall v\in V_i.
\end{equation}
\end{defi}
The presence of dilations  will be crucial in our proof of the non-existence of periodic normal geodesics, see  \eqref{formula_end_d}.
\subsection{Homogeneous spaces and self-similar distances}
In differential geometry, the term {\em homogeneous space} is referred to the quotient space of a Lie group modulo a closed subgroup, in order to still have a transitive action of the Lie group. However, in metric geometry and, more generally in analysis on metric spaces, the term homogeneous is referred to functions that gets multiplied by  some dilations of the space.
Every metric space admitting a dilation, also called  homothety, is said to be self-similar. 

In subFinsler geometry, self-similar spaces are well characterized. As differentiable manifold they have a homogeneous structure of a quotient space of a Carnot group modulo the action of a dilation-invariant subgroup via left-multiplication. 
However, the well-defined action on the right is not by isometries. Hence they are not isometrically homogeneous spaces. They are still called homogeneous because they admit dilations. To avoid this double use of the word homogeneity, we shall call them self-similar (subFinsler) spaces. By the work of Bella{\"\i}che (\cite{bellaiche1996tangent}, \cite{antonelli2022lipschitz}) we know that the metric tangents of (constant-rank) subFinsler manifolds are (constant-rank) self-similar spaces.

\begin{defi}\label{def_selfsimilar}
A \emph{self-similar subFinsler space} is a subFinsler manifold obtained as the quotient space of a (left-invariant) subFinsler Carnot group with respect to the left-action of a dilation-invariant subgroup, and it is equipped with the quotient distribution. Namely, 
assume $G$ is a Carnot group with distribution $\Delta$ and left-invariant norm $||\cdot||$ and $H<G$ is a closed dilation-invariant subgroup, for which $T_1H \cap \Delta_1=\{0\}$. 
On the quotient manifold $\mfaktor{H}{G}:=\{Hg : g\in G\}$ we define a subFinsler structure that makes the projection $\pi:G\rr \mfaktor{H}{G}$ a submetry: we take $\Delta_{\mfaktor{H}{G}}:=\pi_*\Delta$ as (constant-rank) distribution and we define the continuously varing norm on $\mfaktor{H}{G}$ setting for all $p\in \mfaktor{H}{G}$ and for all $v\in 
(\pi_*\Delta)_p\subseteq 
T_p(\mfaktor{H}{G})$
\begin{equation}
    (||v||_{\mfaktor{H}{G}})_p:=\inf\{||w||_q : q\in \pi^{-1}(p), w\in T_q\Delta, \df\pi_q(w)=v \}.
    \label{d defnormadown}
\end{equation}
\end{defi}
Being $H$ dilation invariant, the dilations of $G$ pass to the quotient and define a dilation on $\mfaktor{H}{G}$. Thus, on a self-similar space $\mfaktor{H}{G}$ we call $H$ the {\em origin}, since it is the only point fixed by dilations. We remand to \cite[Section 7]{bellaiche1996tangent} for 
a presentation of self-similar spaces.

\subsection{Pontryagin Maximum Principle and normal curves}
In the remaining part of this section we define the end-point map and we state the Pontryagin Maximum Principle. Afterwards, we present normal curves and we characterize them using sub-differentials of the energy.

\begin{defi}\emph{(Controls)}
Let $M$ be a subFinsler manifold with distribution $\Delta$, choose $X_1,...,X_n\in \Vecc(M)$ such that $\Span(\{X_1(p),...,X_n(p)\})=\Delta_p$ for every $p\in M$. 
The elements of $\Omega:=L^2([0,1],\mathbb{R}^n)$ are called {\em controls}. 
\end{defi}
Consider a subFinsler manifold $M$ and fix $X_1,...,X_n\in \Vecc(M)$ such that the distribution at every $p\in M$ coincides with $\Span(\{X_1(p),...,X_n(p)\})$. Let $\Omega$ be $L^2([0,1],\mathbb{R}^n) $.
To each control $u\in \Omega$ and each point $p\in M$ we associate the unique curve $\gamma_u:[0,1]\rr M$ solving
\begin{equation}
    \begin{cases}
    \gamma_u'(t)=\sum_{i=1}^n u_i(t)X_i(\gamma_u(t)),\text{ for a.e. } t\in[0,1];\\
    \gamma_u(0)=p.
    \end{cases}
    \label{e curvessociatedtocontrol}
\end{equation}
The existence and uniqueness of the solution of (\ref{e curvessociatedtocontrol}) is guaranteed by Carathéodory Existence and Uniqueness Theorem (\cite[Theorem 3.4]{o1997existence}, \cite[Theorem 1.3]{coddington1955theory}).
We shall say that $\gamma_u$ is a/the \emph{curve with control $u$}, or that $u$ is a \emph{control} of $\gamma_u$. Two controls could give the same curve, because the vector fields $X_j$'s might not be linearly independent at some point. Moreover, every curve of finite length admits a reparametrization with control in $\Omega$.
We can use Equation (\ref{e curvessociatedtocontrol}) to define the flow along a control. This will be useful for the computation of the differential of the end-point map.
\begin{defi}\emph{(Flow along a control)}
Let $M$ be a subFinsler manifold with distribution $\Delta$, let $X_1,...,X_n\in \Vecc(M)$ be such that $\Span(\{X_1(p),...,X_n(p)\})=\Delta_p$ for every $p\in M$ and define $\Omega:=L^2([0,1],\mathbb{R}^n) $. The \emph{flow along a control} $u\in \Omega$ is the map $\phi:[0,1]\times M\rr M$ defined as
\begin{equation}
    \phi^t(p):=\gamma_u(t), \ \forall t\in[0,1],
    \label{e def_of_flow}
\end{equation}
where $\gamma_u$ is the curve solving (\ref{e curvessociatedtocontrol}). 
For fixed $s,t\in [0,1]$, with $s<t$, we will write $\phi_s^t:M\rr M$ for the flow from $s$ to $t$:
\begin{equation}
    \phi_s^t:=\phi^{t-s}\circ (\phi^s)^{-1}.
\end{equation}
\label{d flow_along_control}
\end{defi}
\begin{Example}
Let $G$ be a subFinsler Carnot group with first stratum $V_1$. Let $X_1,...,X_n$ be a left-invariant frame of the first stratum. Using the frame $X_1,...,X_n$ identify $L^2([0,1],\mathbb{R}^n)$ with $L^2([0,1],V_1)$. For all $p\in G$ and $u\in L^2([0,1],V_1)$, equation \eqref{e curvessociatedtocontrol} rewrites as
\begin{equation}
    \begin{cases}
    \gamma_u'(t)=\df L_{\gamma_u(t)}u(t),\text{ for a.e. } t\in[0,1];\\
    \gamma_u(0)=p.
    \end{cases}
    \label{e curve_associated_to_control_Carnot}
\end{equation}
In particular, if $\phi:[0,1]\times G\rr G$ is the flow along a control $u\in L^2([0,1],V_1)$, see \eqref{e def_of_flow}, then it is trivial to check that for all $p\in G$ the curve $t\mapsto L_p(\phi^t(1_G))$ solves \eqref{e curve_associated_to_control_Carnot}. Consequently,
\begin{equation}
    \phi^t(p)=R_{\phi^t(1_G)}(p), \ \forall p\in G, \forall t\in[0,1].
    \label{e flow_is_right_translation}
\end{equation}

\end{Example}
\begin{defi}\emph{(End-point map)}
Let $M$ be a subFinsler manifold with distribution $\Delta$. Choose $X_1,....,X_n\in \Vecc(M)$ such that $\Span(\{X_1(p),...,X_n(p)\})=\Delta_p $. Denote $\Omega:=L^2([0,1],\mathbb{R}^n) $ and fix a point $p\in M$. The \emph{end-point map} $\End:\Omega\rr M$ associated to $X_1,...,X_n$ and $p$ is defined as 
 \begin{equation}
     \End(u):=\gamma_u(1),
 \end{equation}
 where $\gamma_u$ solves (\ref{e curvessociatedtocontrol}).
 In a Carnot group we always assume the point $p$ to be the identity element and $X_1,...,X_n$ to be a left-invariant frame of the first stratum.
\end{defi}

\begin{prop}\emph{(Differential of the end-point map, \cite[Proposition 8.5]{agrachev2012introduction}).}
Let $M$ be a subFinsler manifold with distribution $\Delta$, let $X_1,...,X_m\in \Vecc (M)$ such that $\Delta_p=\Span(\{X_1(p),...,X_m(p)\})$ for every $p\in M$. Fix $q\in M$. Set $\Omega:=L^2([0,1],\mathbb{R}^m)$ and denote with $\End:\Omega\rr M$ the end-point map associated to $X_1,...,X_m$ and $q$. Then $\End$ is smooth on $\Omega$ and for every $u,v\in \Omega$ we have
\begin{equation}
    \df \End_uv=\int_0^1 (\phi_t^1)_*\sum_{i=1}^m v_i(t)X_i(\gamma(t))\df t;
\end{equation}
where $\phi^t$ is the flow associated to the control $u$ and $\gamma(t):=\phi^t(q)$, see \eqref{e def_of_flow}.
\label{p differentialend}
\end{prop}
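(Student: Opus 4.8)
The plan is to compute the differential straight from the definition, by differentiating the endpoint $\End(u+\epsilon v)=\gamma_{u+\epsilon v}(1)$ in $\epsilon$ at $\epsilon=0$ and identifying the resulting tangent vector at $\End(u)$. Granting for the moment that $\End$ is smooth on $\Omega$, its Fr\'echet differential agrees with the G\^ateaux derivative $\frac{\df}{\df\epsilon}\big|_{\epsilon=0}\End(u+\epsilon v)$, so it suffices to introduce the variation field $J(t):=\frac{\df}{\df\epsilon}\big|_{\epsilon=0}\gamma_{u+\epsilon v}(t)$ along the reference curve $\gamma(t)=\phi^t(q)$ and to evaluate $J(1)$.

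Next I would derive the ODE satisfied by $J$. Writing the controlled field as $F(t,x)=\sum_i u_i(t)X_i(x)$ and the perturbation as $G(t,x)=\sum_i v_i(t)X_i(x)$, the perturbed curve solves $\dot\gamma_{u+\epsilon v}=F(t,\gamma_{u+\epsilon v})+\epsilon\, G(t,\gamma_{u+\epsilon v})$, and differentiating in $\epsilon$ yields the inhomogeneous linear first-variation equation
\begin{equation}
\dot J(t)=\df_x F(t,\gamma(t))\,J(t)+\sum_{i=1}^m v_i(t)\,X_i(\gamma(t)),\qquad J(0)=0.
\label{e firstvariation}
\end{equation}

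The homogeneous part of \eqref{e firstvariation} is exactly the linearization of the reference flow. By the standard variational equation for dependence on initial conditions, the differential $\df(\phi^t)_q$ in the initial point solves $\dot\Psi=\df_x F(t,\gamma(t))\,\Psi$ with $\Psi(0)=\id$, so the resolvent of the homogeneous equation from time $t$ to time $1$ is governed by these differentials; differentiating the flow identity $\phi_t^1\circ\phi^t=\phi^1$ at $q$ then identifies this resolvent with the differential $(\phi_t^1)_*$ at $\gamma(t)$. Applying Duhamel's variation-of-constants formula to \eqref{e firstvariation} with the initial datum $J(0)=0$, I obtain
\[
\df\End_u v=J(1)=\int_0^1 (\phi_t^1)_*\sum_{i=1}^m v_i(t)\,X_i(\gamma(t))\,\df t,
\]
which is the asserted formula; note that each integrand is a tangent vector at $\phi_t^1(\gamma(t))=\gamma(1)=\End(u)$, so the integral is well posed.

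The main obstacle is analytic rather than geometric. One must justify that $\End$ is smooth (or at least $C^1$) as a map on $\Omega=L^2([0,1],\mathbb{R}^m)$, and that the $\epsilon$-derivative may legitimately be taken inside the ODE and under the time integral when the controls are merely square-integrable. I would handle this with the Carath\'eodory--Gr\"onwall machinery: continuous and then differentiable dependence of the solution of \eqref{e curvessociatedtocontrol} on the $L^2$ parameter, uniform bounds on the compact time interval $[0,1]$, and dominated convergence to pass limits under the integral sign. This is precisely where the smoothness of the $X_i$ and the $L^2$ integrability of the controls are used; by contrast, the identification of the homogeneous propagator with $(\phi_t^1)_*$ and the Duhamel step are then routine.
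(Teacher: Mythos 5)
Your argument is correct, and since the paper gives no proof of this proposition at all---it is quoted with the citation \cite[Proposition 8.5]{agrachev2012introduction}---the relevant comparison is with that reference: your first-variation/Duhamel computation, with the homogeneous propagator identified with $(\phi_t^1)_*$ through the variational equation along $\gamma$, is essentially the standard proof given there. The one part you defer, smoothness of $\End$ on $\Omega$ and the legitimacy of differentiating under the integral for merely $L^2$ controls, is exactly what the cited source settles with the same Carath\'eodory--Gr\"onwall machinery (made tractable because the right-hand side of \eqref{e curvessociatedtocontrol} is affine in the control), so no idea is missing from your sketch.
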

\begin{defi}\emph{(Energy)}
Let $M$ be a subFinsler manifold with continuously varying norm $||\cdot||:\Delta \rr \mathbb{R}$. The \emph{energy} at a point $p\in M$ is the function $\E_p:\Delta_p \rr \mathbb{R} $ defined as
\begin{equation*}
    \E_p:=\frac{1}{2}||\cdot||_p^2.
\end{equation*}
For an absolutely continuous curve $\gamma:[0,1]\rr M$ with $\dot{\gamma}\in \Delta$ we define its \emph{energy} as $\frac{1}{2}\int_0^1||\dot{\gamma}(t)||^2\df t $.
\end{defi}
The curves realizing the infimum in \eqref{e dcc} are called \emph{length-minimizing}. It is well-known that the same infimum is realized by minimizers of the energy, considering length-minimizing curves re-parametrized by constant speed.\\
We are now ready to state the well known Pontryagin Maximum Principle, which gives first-order necessary conditions for curves to be energy-minimizing.
\begin{teo}\emph{(PMP, \cite[Theorem 12.10]{agrachev2013control}).}
Let $M$ be a subFinsler manifold with distribution $\Delta$, let $X_1,...,X_m\in \Vecc (M)$ such that $\Delta_p=\Span\{X_1(p),...,X_m(p)\}$ for every $p\in M$. Let $\Omega$ be $L^2([0,1],\mathbb{R}^m) $.\\
For every $\nu\in\mathbb{R}$ and $v=(v_1,...,v_m)\in \mathbb{R}^m$;  define $h_{v,\nu}:T^*M\rr \mathbb{R}$ as
\begin{equation}
    h_{v,\nu}(\eta):=\left\langle \eta, \sum_{i=1}^m v_iX_i(\pi(\eta))  \right\rangle+ \nu \E_{\pi(\eta)}\left(\sum_{i=1}^m v_iX_i(\pi(\eta))\right), \ \ \forall \eta\in T^*M,
    \label{e defh}
\end{equation}
where $\pi:T^*M\rr M$ is the canonical projection.
If a curve $\gamma:[0,1]\rr M$ with control $u$ is energy-minimizing, then there exists $\nu\in\{-1,0\}$ and an absolutely continuous curve $\eta:[0,1]\rr T^*M$ such that $\eta(0)\in T^*_{\gamma(0)}M$ and
\begin{equation}
\dot{\eta}(t)=\Vec{h}_{u(t),\nu}(\eta(t)), \text{ for a.e. } t\in[0,1];
    \label{e pmp1}
\end{equation}
\begin{equation}
h_{u(t),\nu}(\eta(t))\geq h_{v,\nu}(\eta(t)), \ \ \forall v\in \mathbb{R}^m, \text{ for a.e. } t\in[0,1],
    \label{e pmp2}
\end{equation}
where $\Vec{h}_{u(t),\nu}(\eta(t))$ is the Hamiltonian vector field associated to $h_{u(t),\nu}$, see \cite[Section 11.5.2]{agrachev2013control}.
\label{p pmpgeneral}
\end{teo}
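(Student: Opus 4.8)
The statement is the classical Pontryagin Maximum Principle, so the plan is to reproduce the geometric proof from optimal control theory (as in \cite{agrachev2013control}): minimality forces an augmented endpoint of the trajectory to lie on the boundary of a reachable set, and a separating covector transported along the curve yields both \eqref{e pmp1} and \eqref{e pmp2}. Writing $f_v:=\sum_{i=1}^m v_iX_i\in\Vecc(M)$, I would first augment the state space to $\hat M:=M\times\mathbb{R}$ with control vector fields $\hat f_v:=\big(f_v,\ \E_{(\cdot)}(f_v)\big)$, so that the extra coordinate records the accumulated energy; along $u$ its terminal value is the energy $\mathcal E$ of $\gamma$. That $\gamma$ is energy-minimizing with fixed endpoints means exactly that the augmented endpoint $\hat q_1:=(\gamma(1),\mathcal E)$ lies on the boundary of the set reachable from $(\gamma(0),0)$: no admissible control reaches $\gamma(1)$ with strictly smaller energy. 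Equivalently, the augmented end-point map $\widehat\End$ is not locally open at $u$.

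The core step is to convert this non-openness into a multiplier. Following Pontryagin's needle-variation argument, I would consider elementary variations of $u$ concentrated near a Lebesgue point $\tau$ and switched there to a constant value $w\in\mathbb{R}^m$; by the first-variation formula underlying Proposition~\ref{p differentialend} (applied on $\hat M$), these generate a convex cone of attainable endpoint directions at $\hat q_1$, containing all vectors $(\hat\phi^1_\tau)_*\big(\hat f_w-\hat f_{u(\tau)}\big)$ and their positive combinations. Since $\hat q_1$ is a boundary point, this cone misses the cost-decreasing direction, so a Hahn--Banach separation produces a nonzero covector $(\lambda,\nu)\in T^*_{\gamma(1)}M\times\mathbb{R}$ that is non-positive on the cone; the orientation of the energy axis forces $\nu\le0$, and rescaling normalizes $\nu\in\{-1,0\}$ with $\lambda\in T^*_{\gamma(1)}M$.

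It then remains to read off \eqref{e pmp1} and \eqref{e pmp2}. I set $\eta(t):=(\phi^1_t)^*\lambda\in T^*_{\gamma(t)}M$, the pullback of the endpoint covector along the flow of $u$; it is absolutely continuous, projects onto $\gamma$, and satisfies $\eta(1)=\lambda$ and $\eta(0)\in T^*_{\gamma(0)}M$. Pairing the separating covector $(\lambda,\nu)$ with the elementary variation at $\tau$ and rewriting the pushforward as a pullback gives, for a.e.\ $\tau$ and every $w\in\mathbb{R}^m$, the inequality $\langle\eta(\tau),f_w(\gamma(\tau))\rangle+\nu\,\E_{\gamma(\tau)}(f_w)\le\langle\eta(\tau),f_{u(\tau)}(\gamma(\tau))\rangle+\nu\,\E_{\gamma(\tau)}(f_{u(\tau)})$, which is exactly the maximality \eqref{e pmp2} for the family $h_{\cdot,\nu}$ of \eqref{e defh}. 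Finally, since $\eta$ is the cotangent lift along the flow of the time-dependent field $f_{u(t)}$, it is a trajectory of the Hamiltonian vector field of the momentum function $\eta\mapsto\langle\eta,f_{u(t)}\rangle$; the multiplier $\nu$ is constant (because $\hat f_v$ does not depend on the cost coordinate) and supplies precisely the energy term of \eqref{e defh}, so $\eta$ solves \eqref{e pmp1}.

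I expect the genuine obstacle to be the separation step: showing that the needle variations span a cone large enough that a boundary point produces a \emph{nonzero} multiplier, together with the $L^2$ functional-analytic subtleties (the possible non-closedness of $\Ima\df\widehat\End_u$ forces an open-mapping/approximation argument rather than a naive implicit-function dichotomy). By contrast, the cotangent-lift computation behind \eqref{e pmp1} and the pointwise reading of \eqref{e pmp2} are comparatively formal; one may further note that in the normal case $\nu=-1$ the function $v\mapsto h_{v,-1}(\eta)$ is concave (affine minus the convex $\tfrac12\|f_v\|^2$), so \eqref{e pmp2} reduces to a first-order stationarity, whereas for $\nu=0$ it forces $\langle\eta(t),X_i(\gamma(t))\rangle=0$ and becomes automatic.
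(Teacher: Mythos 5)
The paper contains no proof of this statement: it is imported verbatim from \cite[Theorem 12.10]{agrachev2013control}, so there is no internal argument to compare yours against. Your sketch is the standard textbook proof of the PMP --- augment the state by the running cost, observe that minimality puts the augmented endpoint $\hat q_1=(\gamma(1),\mathcal E)$ on the boundary of the reachable set, generate a convex cone of endpoint directions by needle variations, separate by Hahn--Banach, normalize $\nu\in\{-1,0\}$ --- i.e., essentially the argument of the cited source. As an outline it is architecturally sound, and you correctly single out the separation step as the genuinely hard point: showing that if the needle-variation cone were not separated from the cost-decreasing direction then the augmented endpoint map would be locally open is a Brouwer-fixed-point (conic approximation) argument, not linear algebra, and as written you name it rather than carry it out. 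So the proposal is a faithful plan rather than a complete proof.

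One step is actually incorrect as stated: you set $\eta(t):=\left(\phi^1_t\right)^*\lambda$ and conclude that it solves \eqref{e pmp1}. The plain pullback along the flow of $f_{u(t)}$ is an integral curve of the Hamiltonian vector field of the fiberwise-linear momentum $\eta\mapsto\langle\eta,f_{u(t)}(\pi(\eta))\rangle$ only; the field $\Vec{h}_{u(t),\nu}$ in \eqref{e pmp1} carries an additional vertical component coming from differentiating $\nu\,\E_{\pi(\eta)}\left(f_{u(t)}\right)$ in the base point. The correct adjoint state is the $T^*M$-component of the pullback of the augmented covector $(\lambda,\nu)$ along the \emph{augmented} flow on $\hat M$ (whose differential has a lower-triangular coupling block, since the cost coordinate evolves in a state-dependent way); the same coupling enters when you pair $(\lambda,\nu)$ with $(\hat\phi^1_\tau)_*\bigl(\hat f_w-\hat f_{u(\tau)}\bigr)$ to read off the maximality \eqref{e pmp2}. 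Your $\eta$ coincides with the true adjoint state only when $\nu=0$ or when the running cost is constant in the state --- which is exactly the left-invariant Carnot situation the paper exploits later (this is why its rewriting \eqref{e supmp} is legitimate there), but not the general subFinsler case of the statement. Two further caveats: since the norm is only continuously varying, the state-derivative of $\E$, hence $\Vec{h}_{u(t),\nu}$ itself, need not exist in general (a regularity issue inherited from the citation, not created by you); and your closing remark that for $\nu=-1$ the maximality \eqref{e pmp2} ``reduces to first-order stationarity'' should be phrased via subdifferentials, as the paper does in Proposition~\ref{p puntuale}, because $\E$ need not be differentiable in $v$.
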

In this paper we will use a reformulation 
of Proposition~\ref{p pmpgeneral}. We first need to recall the definition of sub-differential.
\begin{defi}\emph{(Sub-differentials)}
Let $V$ be a vector space and $f:V\rr \mathbb{R}$. A \emph{sub-differential} of $f$ at $v\in V$ is a linear function $a:V\rr \mathbb{R}$ such that 
\begin{equation}
    a(u-v)\leq f(u)-f(v), \quad \forall u\in V.
\end{equation}
We will use $\partial_vf$ to denote the \emph{set of sub-differentials} of $f$ at $v$.
\end{defi}
It is an easy exercise to check that the set of sub-differentials of a convex function is always a non-empty closed convex set.
\begin{prop}\emph{(subFinsler PMP revised)}
Let $M$ be a subFinsler manifold with distribution $\Delta$, let $X_1,...,X_m\in \Vecc (M)$ such that $\Delta_p=\Span(\{X_1(p),...,X_m(p)\})$ for every $p\in M$. If a curve $\gamma:[0,1]\rr M$ with control $u$ is energy-minimizing then there exists $\lambda\in T^*_{\gamma(1)}M$ such that either
\begin{equation}
        \left(\phi_t^1\right)^*\lambda\in \partial_{\dot{\gamma}(t)}\E_{\gamma(t)}, \ \ \text{ for a.e. } t\in[0,1]
        \label{e normale}
\end{equation}
or
\begin{equation}
    \lambda\left(\left(\phi_t^1\right)_* X\right)
    =0, \ \ \forall t\in[0,1] , \ \forall  X\in \Delta_{\gamma(t)},
    \label{e abnormale}
\end{equation}
the function $\phi^t$ denoting the flow along the control $u$, see \eqref{e def_of_flow}.
\label{p puntuale}
\end{prop}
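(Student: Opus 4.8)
The plan is to deduce Proposition~\ref{p puntuale} directly from the Pontryagin Maximum Principle in the form of Theorem~\ref{p pmpgeneral}. Applying that theorem to the energy-minimizing curve $\gamma$ with control $u$ produces a covector $\nu\in\{-1,0\}$ together with an absolutely continuous lift $\eta:[0,1]\rr T^*M$ satisfying the Hamiltonian equation \eqref{e pmp1} and the maximality condition \eqref{e pmp2}, and with $\pi(\eta(t))=\gamma(t)$. I would set $\lambda:=\eta(1)\in T^*_{\gamma(1)}M$ and show that the two alternatives $\nu=-1$ and $\nu=0$ yield, respectively, the normal equation \eqref{e normale} and the abnormal equation \eqref{e abnormale}.

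The heart of the argument is the pointwise analysis of \eqref{e pmp2}. Since $\sum_i v_iX_i(\gamma(t))$ ranges over all of $\Delta_{\gamma(t)}$ as $v$ ranges over $\mathbb{R}^m$, condition \eqref{e pmp2} says that for a.e. $t$ the horizontal vector $\dot\gamma(t)$ maximizes $w\mapsto \langle\eta(t),w\rangle+\nu\,\E_{\gamma(t)}(w)$ over $w\in\Delta_{\gamma(t)}$. In the normal case $\nu=-1$, rewriting $\langle\eta(t),\dot\gamma(t)\rangle-\E_{\gamma(t)}(\dot\gamma(t))\ge \langle\eta(t),w\rangle-\E_{\gamma(t)}(w)$ as $\langle\eta(t),w-\dot\gamma(t)\rangle\le \E_{\gamma(t)}(w)-\E_{\gamma(t)}(\dot\gamma(t))$ is precisely the assertion that the restriction $\eta(t)|_{\Delta_{\gamma(t)}}$ is a sub-differential of $\E_{\gamma(t)}$ at $\dot\gamma(t)$. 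In the abnormal case $\nu=0$ the functional being maximized is linear in $w$ on the vector space $\Delta_{\gamma(t)}$, and a linear functional can be maximized over an entire subspace only if it vanishes on it; hence $\eta(t)|_{\Delta_{\gamma(t)}}=0$ for a.e. $t$, and then for every $t$ by continuity of $\eta$ and of the sub-bundle $\Delta$ along $\gamma$.

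It remains to transport these pointwise facts about $\eta(t)$ to the fixed covector $\lambda=\eta(1)$, which is where the flow $\phi_t^1$ enters. The key identity I would establish is $\eta(t)=\left(\phi_t^1\right)^*\lambda$ for all $t$, i.e. that the adjoint curve $\eta$ is the cotangent lift of the base flow $\phi$. Granting this, pulling back along $\phi_t^1$ and using $\langle\lambda,(\phi_t^1)_*X\rangle=\langle(\phi_t^1)^*\lambda,X\rangle$ converts $\eta(t)|_{\Delta_{\gamma(t)}}\in\partial_{\dot\gamma(t)}\E_{\gamma(t)}$ into \eqref{e normale} and $\eta(t)|_{\Delta_{\gamma(t)}}=0$ into \eqref{e abnormale}. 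To prove the identity I would fix $w\in T_{\gamma(t_0)}M$, push it forward by $w(t):=(\phi_{t_0}^t)_*w$ (which solves the linearized flow equation), and differentiate $t\mapsto\langle\eta(t),w(t)\rangle$ using \eqref{e pmp1}: the purely drift part of the Hamiltonian contributes nothing, by the antisymmetric cancellation between the equation for $\dot\eta$ and the variational equation for $\dot w$, so the pairing is conserved exactly when the base-point differential of $w\mapsto \nu\,\E(\sum_i v_iX_i(\cdot))$ vanishes.

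This last point is the main obstacle, and it is precisely where the left-invariant structure is used: for a left-invariant frame on a Carnot group the energy $\E_q(\sum_i v_iX_i(q))$ is independent of the base point $q$, so the vertical forcing coming from the $\nu\,\E$ term has vanishing base-differential and the conservation $\langle\eta(t),w(t)\rangle=\langle\lambda,(\phi_{t_0}^1)_*w\rangle$ holds, giving $\eta(t_0)=(\phi_{t_0}^1)^*\lambda$; equivalently, by \eqref{e flow_is_right_translation} the map $\phi_t^1$ is a right translation, under which the energy of horizontal vectors is preserved. For the abnormal alternative $\nu=0$ no such term is present and the transport identity reduces to the classical cotangent lift, so only the normal case is genuinely delicate. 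I would close by noting that the passage from the a.e. statement to the everywhere statement in \eqref{e abnormale} uses nothing beyond continuity.
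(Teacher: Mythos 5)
Your architecture coincides with the paper's: apply Theorem~\ref{p pmpgeneral}, set $\lambda:=\eta(1)$, and split on $\nu$; your pointwise reading of \eqref{e pmp2} (the sub-differential inequality for $\nu=-1$, and the fact that a linear functional maximized over the whole subspace $\Delta_{\gamma(t)}$ must vanish on it for $\nu=0$) is exactly the paper's. The divergence is at the transport identity $\eta(t)=\left(\phi_t^1\right)^*\eta(1)$, which is \eqref{e supmp} in the paper: there it is dispatched with a citation to \cite[Section~12.2]{agrachev2013control}, while you attempt a proof by differentiating $t\mapsto\langle\eta(t),(\phi_{t_0}^t)_*w\rangle$. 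Your computation is correct: using \eqref{e pmp1} and the variational equation for $w(t)$, the fiber-linear parts cancel and $\frac{\df}{\df t}\langle\eta(t),w(t)\rangle=-\nu\,\df_q\big(\E_q(\textstyle\sum_iu_i(t)X_i(q))\big)\big|_{q=\gamma(t)}(w(t))$, so the pairing is conserved precisely when the base-differential of the running cost vanishes along $\gamma$. But here is the gap: you secure that vanishing by invoking the left-invariant structure of a Carnot group, which is not among the hypotheses --- Proposition~\ref{p puntuale} is stated for an arbitrary subFinsler manifold, an arbitrary spanning family $X_1,\dots,X_m$, and a genuinely point-dependent norm. As written, your argument proves the statement only in the Carnot setting (and, via \eqref{e choiceofu}, in the quotient setting of Lemma~\ref{l lift}), not in the stated generality. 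A side remark is also off: right translations \eqref{e flow_is_right_translation} do \emph{not} preserve $\Delta$ or the energy of horizontal vectors; the mechanism you need, and had already stated correctly, is that $q\mapsto\E_q(\sum_iv_iX_i(q))$ is constant in $q$ when frame and norm are left-invariant.

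That said, your scruple is substantive: for $\nu=-1$ the Hamiltonian \eqref{e defh} is not fiberwise linear, so its flow is the cotangent lift of the base flow only up to the vertical forcing you identified, and the identity \eqref{e supmp} can genuinely fail for a bad frame. On $M=\mathbb{R}$ with $\|v\|_x=e^{\varphi(x)}|v|$ and frame $X_1=\partial_x$, the PMP adjoint along a geodesic is $\eta(t)=e^{2\varphi(\gamma(t))}\dot\gamma(t)\,\df x$, nonconstant, whereas $(\phi_t^1)^*\lambda$ is constant because the flow consists of translations; the normal alternative \eqref{e normale} is then unobtainable with $\lambda\neq0$ (it would force $e^{2\varphi}\dot\gamma$ to be constant rather than the true conservation $e^{\varphi}\dot\gamma=\mathrm{const}$), and only the vacuous choice $\lambda=0$ in \eqref{e abnormale} survives; choosing instead the unit frame $e^{-\varphi}\partial_x$ repairs everything. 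So the paper's one-line citation is implicitly using that the base-differential of $q\mapsto\E_q(\sum_iu_i(t)X_i(q))$ vanishes along $\gamma$ --- true for every frame the paper actually employs (left-invariant frames in Proposition~\ref{p extremalsLie}, projected frames in Lemma~\ref{l lift}). To repair your write-up without shrinking the statement's scope artificially, isolate that vanishing as the hypothesis under which your conservation argument closes, rather than retreating to Carnot groups mid-proof; as it stands, the proposal does not prove the proposition as stated.
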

\begin{proof}
By Proposition~\ref{p pmpgeneral} if a curve $\gamma:[0,1]\rr M$ with control $u$ is energy-minimizing then there exist $\nu\in\{-1,0\}$ and an absolutely continuous curve $\eta:[0,1]\rr T^*M$ such that $\eta(0)\in T^*_{\gamma(0)}M$ and equations (\ref{e pmp1}) and (\ref{e pmp2}) hold. 
It is well know (see for example \cite[Section~12.2]{agrachev2013control}) that Equation (\ref{e pmp1}) rewrites for all $t\in[0,1]$ as
\begin{equation}
\begin{cases}
\pi(\eta(t))=\gamma(t);\\
\eta(t)=\left(\phi_t^1\right)^*\eta(1).
\end{cases}
\label{e supmp}
\end{equation}
Setting $\lambda:=\eta(1)$, by (\ref{e supmp}) and the definition of $h_{v,\nu}$ in (\ref{e defh}) we have that 
\begin{equation}\label{h_semplice}
    h_{v,\nu}(\eta(t))=\left\langle \left(\phi_t^1\right)^*\lambda, \sum_{i=1}^m v_iX_i(\gamma(t))  \right\rangle+ \nu \E_{\gamma(t)}\left(\sum_{i=1}^m v_iX_i(\gamma(t))\right) .
\end{equation}
When $\nu=-1$, 
Equation (\ref{e pmp2}) is equivalent to, for all $v\in\mathbb{R}^m$,
\begin{equation*}
\left\langle \left(\phi_t^1\right)^*\lambda, \sum_{i=1}^m v_iX_i(\gamma(t)) - \dot\gamma(t)  \right\rangle
\leq 
\E_{\gamma(t)}\left(\sum_{i=1}^m v_iX_i(\gamma(t))\right)  - \E_{\gamma(t)}\left(\dot\gamma(t)\right) .
 \end{equation*}
  When $v$ varies, the vector $\sum_{i=1}^m v_iX_i(\gamma(t))$ gives an arbitrary element of the domain of $\E_{\gamma(t)}$. Hence, we have  that Equation (\ref{e pmp2}) is
equivalent to (\ref{e normale}).
When $\nu=0$, 
from \eqref{h_semplice} Equation (\ref{e pmp2}) is
\begin{equation*}
\left\langle \left(\phi_t^1\right)^*\lambda, \sum_{i=1}^m v_iX_i(\gamma(t)) - \dot\gamma(t)  \right\rangle
\leq 0 .
 \end{equation*}
Noticing that 
we are applying $\left(\phi_t^1\right)^*\lambda$ to an arbitrary element of $
\Delta_{\gamma(t)} 
$, we finally deduce that in this second case
   Equation (\ref{e pmp2}) is
equivalent to   (\ref{e abnormale}).
\end{proof}

\begin{defi}\emph{(Normal and abnormal curves)}
In the setting of Proposition~\ref{p puntuale}, we say that a curve $\gamma:[0,1]\rr M$ is a \emph{normal curve} if there exists a co-vector $\lambda\in T^*_{\gamma(1)}M$ such that (\ref{e normale}) holds. We say that a curve is an \emph{abnormal curve} if there exist a co-vector $\lambda\in T^*_{\gamma(1)}M$ such that (\ref{e abnormale}) holds.
\label{d extremals}
\end{defi}

We can define normal curves also with domain different than $[0,1]$. 
\begin{defi}
Let $M$ be a subFinsler manifold with distribution $\Delta$ and let $I$ be an interval. A curve $\gamma:I\rr M$ is a \emph{normal curve} if the restriction of $\gamma$ to every compact sub-interval of $I$, when affinely re-parametrized in $[0,1]$, is a normal curve in the sense of Definition~\ref{d extremals}.
\label{d nextremals}
\end{defi}
The following remark ensures that the restriction of a normal curve to every sub-interval of its domain is a normal curve and therefore that Definition~\ref{d nextremals} is well posed.
\begin{oss}
Let $M$ be a subFinsler manifold with distribution $\Delta$.
Let $\gamma:[0,1]\rr M$ be a normal curve and $\lambda\in T^*_{\gamma(1)}M$ be a co-vector such that (\ref{e normale}) holds. For every $\alpha\in \mathbb{R}_{>0}$, define the curve $\gamma_\alpha:\left[0,\frac{1}{\alpha}\right]\rr M$ as
\begin{equation}
    \gamma_{\alpha}(t):=\gamma(\alpha t), \ \ \forall t\in\left[0,\frac{1}{\alpha}\right].
\end{equation}
Then
\begin{equation}
      (\phi^{\frac{1}{\alpha}}_{t;\alpha})^*\lambda_\alpha \in \partial_{\dot{\gamma}_\alpha(t)}\E_{\gamma_\alpha(t)}, \ \ \forall t\in \left[0,\frac{1}{\alpha}\right],
      \label{e reparametrizing}
\end{equation} 
where $\lambda_{\alpha}\in T^*_{\gamma(1)}M$is defined setting 
$\lambda_{\alpha}:=\alpha\lambda$ and $(\phi^{\frac{1}{\alpha}}_{t;\alpha})$ is the flow along the control $\alpha u(\alpha t) \mathbbm{1}_{\left[0,\frac{1}{\alpha}\right]} $ from $t$ to $\frac{1}{\alpha}$.
\label{o reparametrizing}
\end{oss}
\begin{proof}
By definition of sub-differential and of $\E$, if for some $p\in M$, $v\in T_pM$ and $a\in T_p^*M$ we have $a\in \partial_v\E_p$, then $\tau a\in  \partial_{\tau v}\E_p$ for every $\tau>0$.
Moreover, we have for all $t\in\left[0,\frac{1}{\alpha}\right]$ that $\gamma_\alpha'(t)=\alpha\gamma'(\alpha t)$ and that $\phi^{\frac{1}{\alpha}}_{t;\alpha}$ is equal to the flow $\phi_{\alpha t}^{1}$ along $u$ from $\alpha t$ to $1$. Consequently, since $(\phi_{\alpha t}^{1})^*\lambda \in \partial_{\dot{\gamma}(\alpha t)}\E_{\gamma(\alpha t)}$ for all $t\in\left[0,\frac{1}{\alpha}\right]$, we have that $(\phi^{\frac{1}{\alpha}}_{t;\alpha})^*(\alpha\lambda) \in \partial_{\dot{\gamma}_\alpha(t)}\E_{\gamma_\alpha(t)}$ for all $t\in\left[0,\frac{1}{\alpha}\right]$.
\end{proof}

From Proposition~\ref{p puntuale} and the formula of the differential of $\End$ in Proposition~\ref{p differentialend} we can get the equivalent of \eqref{e 2bis} for subFinsler manifolds.
\begin{cor}\emph{(Normal equation and end-point map)}
Let $M$ be a subFinsler manifold with distribution $\Delta$. Choose $X_1,...,X_m\in\Vecc(M)$ such that $\Span(\{X_1(p),...,X_m(p)\})=\Delta_p$ for every $p\in M$ and fix $q\in M$. Set $\Omega:=L^2([0,1],\mathbb{R}^m)$ and denote with $\End:\Omega\rr M$ the end-point map associated to $X_1,...,X_m$ and $q$. 
Let $\gamma:[0,1]\rr M$ be a normal curve with control $u$ such that $\gamma(0)=q$. Let $\lambda\in T^*_{\gamma(1)}M$ be a co-vector for which \eqref{e normale} holds. Then
\begin{equation}
    \lambda(\df\End_u u)=\int_0^1||\dot{\gamma}(t)||_{\gamma(t)}^2\df t,
    \label{e beingextremal}
\end{equation}

\label{p extremals}
\end{cor}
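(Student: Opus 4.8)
The plan is to combine the formula for the differential of the end-point map from Proposition~\ref{p differentialend} with the normal equation \eqref{e normale}, the bridge between the two being an Euler-type identity for sub-differentials of the ($2$-homogeneous) energy. This is exactly the subFinsler replacement for the Riemannian computation $\lambda(\df\End_u u)=\langle u,u\rangle=||u||_{L^2}^2$ that produced \eqref{e 2bis}.

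First I would specialize Proposition~\ref{p differentialend} to $v=u$. Since $\gamma$ has control $u$ and $\gamma(0)=q$, the curve equation \eqref{e curvessociatedtocontrol} gives $\sum_{i=1}^m u_i(t)X_i(\gamma(t))=\dot{\gamma}(t)$ for a.e.\ $t$, and hence
\[
\df\End_u u=\int_0^1 (\phi_t^1)_*\dot{\gamma}(t)\,\df t .
\]
Every vector $(\phi_t^1)_*\dot{\gamma}(t)$ lies in the single tangent space $T_{\gamma(1)}M$, so this is a genuine vector-valued integral in a fixed finite-dimensional space, and I may apply the constant linear functional $\lambda\in T^*_{\gamma(1)}M$ underneath the integral sign. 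Using the definition of the pull-back of a co-vector, $\lambda\big((\phi_t^1)_*X\big)=\big((\phi_t^1)^*\lambda\big)(X)$, this yields
\[
\lambda(\df\End_u u)=\int_0^1 \big((\phi_t^1)^*\lambda\big)(\dot{\gamma}(t))\,\df t .
\]

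The decisive step is to evaluate the integrand via the normal equation. By \eqref{e normale}, for a.e.\ $t$ the co-vector $a_t:=(\phi_t^1)^*\lambda$ is a sub-differential of $\E_{\gamma(t)}$ at $\dot{\gamma}(t)$. Here I would insert the elementary but crucial observation that any sub-differential $a$ of a $2$-homogeneous convex function $f$ at a point $v$ satisfies $a(v)=2f(v)$: testing the defining inequality $a(w-v)\le f(w)-f(v)$ against $w=\tau v$ gives $(\tau-1)a(v)\le(\tau^2-1)f(v)$ for every $\tau>0$, and letting $\tau\to1^+$ and $\tau\to1^-$ forces, respectively, $a(v)\le 2f(v)$ and $a(v)\ge 2f(v)$. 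Applied to $f=\E_{\gamma(t)}=\tfrac12||\cdot||_{\gamma(t)}^2$ and $v=\dot{\gamma}(t)\in\Delta_{\gamma(t)}$, this gives
\[
a_t(\dot{\gamma}(t))=2\,\E_{\gamma(t)}(\dot{\gamma}(t))=||\dot{\gamma}(t)||_{\gamma(t)}^2 \qquad\text{for a.e.\ }t .
\]

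Substituting this identity into the integral above produces exactly \eqref{e beingextremal}. The only genuine obstacle is the homogeneity identity of the third step; the rest is the linearity of $\lambda$, the definition of pull-back, and the specialization $v=u$. I would finally remark that no integrability issue arises, since the integrand coincides a.e.\ with $||\dot{\gamma}(t)||_{\gamma(t)}^2$, which is integrable because $u\in L^2([0,1],\mathbb{R}^m)$.
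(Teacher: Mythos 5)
Your proof is correct and follows essentially the same route as the paper: apply Proposition~\ref{p differentialend} with $v=u$, move $\lambda$ under the integral, and evaluate $\left(\left(\phi_t^1\right)^*\lambda\right)(\dot{\gamma}(t))=||\dot{\gamma}(t)||_{\gamma(t)}^2$ via the normal equation. The only difference is that the paper cites this last identity from the literature (\cite[Lemma 2.19]{Hakavuori-2020-step2_geodesics}), whereas you prove it directly with the Euler-type two-sided homogeneity argument, which is a correct and self-contained substitute.
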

\begin{proof}
By Proposition~\ref{p differentialend} we have
\begin{equation}
    \lambda(\df\End_uv)=\int_0^1\lambda\left(\left(\phi_t^1\right)_*\sum_{i=1}^m v_i(t)X_i(\gamma(t))\right)\mathrm{d}t, \ \ \forall v\in \Omega.
    \label{e 2puntuale}
\end{equation}
By \eqref{e normale} we have
\begin{equation}
    \left(\left(\phi_t^1\right)^*\lambda\right)(\dot{\gamma}(t))=||\dot{\gamma}(t)||^2, \text{ for a.e. } t\in[0,1], 
    \label{e sub-differential_applied_to_derivative}
\end{equation}
see for example \cite[Lemma 2.19]{Hakavuori-2020-step2_geodesics}. Equations \eqref{e 2puntuale} and \eqref{e sub-differential_applied_to_derivative} imply \eqref{e beingextremal}.
\end{proof}
In the case of Carnot groups we always choose a left-invariant frame as the set of vector fields spanning the distribution $\Delta$ at every point. Therefore, we can interpret controls as functions $u\in L^2([0,1],\Delta_{1})$ and associate to each absolutely continuous curve a unique control: the left translation at the origin of the derivative of the curve. The flow along a control coincides with the right-translation along the associated curve, hence we can rewrite Proposition~\ref{p puntuale} and Corollary \ref{p extremals} as follows:
\begin{prop}
Let $G$ be a subFinsler Carnot group equipped with first stratum $V_1$ and a left-invariant norm, let $\{X_1,...,X_m\}$ be a left-invariant frame of the first stratum. Let $\End$ be the end-point map associated to $X_1,...,X_m$ and $1_G$. Let $\gamma:[0,1]\rr G$ be a normal curve with control $u\in L^2([0,1],V_1)$ and such that $\gamma(0)=1_G$. Then, there exist a right-invariant co-vector $\lambda$ such that
\begin{equation}
    \lambda\circ \Ad_{\gamma(t)}\in \partial_{u(t)}\E_{1}, \text{ for a.e. } t\in[0,1],
    \label{e normalLie}
\end{equation}
and
\begin{equation}
    \lambda(\df\End_u u)=||u||_{L^2}^2.
    \label{e beingextremalLie1}
\end{equation}
\label{p extremalsLie}
\end{prop}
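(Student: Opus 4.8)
The plan is to transport the general statements of Proposition~\ref{p puntuale} and Corollary~\ref{p extremals} into the group setting, exploiting the two structural features of a Carnot group that are not available on a general subFinsler manifold: the flow along a control is a right-translation, see \eqref{e flow_is_right_translation}, and the norm is left-invariant. I would start from the normal equation supplied by Proposition~\ref{p puntuale}: since $\gamma$ is normal there is a co-vector $\lambda_1\in T^*_{\gamma(1)}G$ such that $(\phi_t^1)^*\lambda_1\in \partial_{\dot\gamma(t)}\E_{\gamma(t)}$ for a.e. $t$. I would then let $\lambda$ be the right-invariant co-vector whose value at $\gamma(1)$ is $\lambda_1$; equivalently, by right-invariance its value at $1_G$ is $\lambda_1\circ \df(R_{\gamma(1)})_{1_G}$, where $R_h$ and $L_h$ denote right and left translation by $h$.

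The computational heart of the argument is rewriting the pulled-back co-vector in terms of the adjoint. By \eqref{e flow_is_right_translation} the flow along $u$ is $\phi^t=R_{\gamma(t)}$, so the flow from $t$ to $1$ is the right-translation $\phi_t^1=R_{\gamma(t)^{-1}\gamma(1)}$. Setting $g:=\gamma(t)^{-1}\gamma(1)$ and recalling from \eqref{e curve_associated_to_control_Carnot} that $\dot\gamma(t)=\df(L_{\gamma(t)})_{1_G}u(t)$, I would compose $(\phi_t^1)^*\lambda_1$ with $\df(L_{\gamma(t)})_{1_G}$ and use the elementary group identity $R_g\circ L_{\gamma(t)}=R_{\gamma(1)}\circ C_{\gamma(t)}$, where $C_{\gamma(t)}$ is conjugation by $\gamma(t)$ and $\df(C_{\gamma(t)})_{1_G}=\Ad_{\gamma(t)}$. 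This gives
\begin{equation*}
\left((\phi_t^1)^*\lambda_1\right)\circ \df(L_{\gamma(t)})_{1_G}
= \lambda_1\circ \df(R_{\gamma(1)})_{1_G}\circ \Ad_{\gamma(t)}
= \lambda\circ \Ad_{\gamma(t)},
\end{equation*}
where on the right $\lambda$ is the value at $1_G$ of the right-invariant extension.

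With this identity in hand, equation \eqref{e normalLie} would follow by transporting the sub-differential condition from $\E_{\gamma(t)}$ to $\E_{1_G}$: left-invariance of the norm gives $\E_{\gamma(t)}(\df(L_{\gamma(t)})w)=\E_{1_G}(w)$, so precomposition with $\df(L_{\gamma(t)})_{1_G}$ carries $\partial_{\dot\gamma(t)}\E_{\gamma(t)}$ onto $\partial_{u(t)}\E_{1_G}$, yielding $\lambda\circ\Ad_{\gamma(t)}\in\partial_{u(t)}\E_1$. For \eqref{e beingextremalLie1} I would invoke Corollary~\ref{p extremals} with the endpoint co-vector $\lambda_1$, obtaining $\lambda_1(\df\End_u u)=\int_0^1\|\dot\gamma(t)\|^2\,\df t$; since the value at $\gamma(1)$ of the right-invariant co-vector $\lambda$ is by construction exactly $\lambda_1$, the left-hand side equals $\lambda(\df\End_u u)$, while left-invariance of the norm gives $\|\dot\gamma(t)\|_{\gamma(t)}=\|u(t)\|_{1_G}$ and hence identifies the integral with $\|u\|_{L^2}^2$.

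The anticipated obstacle is bookkeeping rather than conceptual: one must keep the three maps (left translation, right translation, conjugation) and their differentials straight, and be consistent about whether the right-invariant co-vector is viewed as a co-vector field or as its value at a single point, since \eqref{e normalLie} uses it at $1_G$ while \eqref{e beingextremalLie1} uses it at $\gamma(1)$. All the content sits in the identity $R_g\circ L_{\gamma(t)}=R_{\gamma(1)}\circ C_{\gamma(t)}$ and the resulting appearance of $\Ad_{\gamma(t)}$; once this is secured, both conclusions are immediate from the general results together with the left-invariance of the energy.
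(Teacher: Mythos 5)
Your proposal is correct and takes essentially the same route as the paper's own proof: both define $\lambda$ as the right-invariant extension of the endpoint co-vector from the normal equation, use $\phi^t=R_{\gamma(t)}$ from \eqref{e flow_is_right_translation} to compute $\left(\phi_t^1\right)^*$, identify $\df R_{\gamma(t)}^{-1}\circ \df L_{\gamma(t)}$ at the identity with $\Ad_{\gamma(t)}$, transport the sub-differential from $\E_{\gamma(t)}$ to $\E_{1}$ by left-invariance of the energy, and obtain \eqref{e beingextremalLie1} from Corollary~\ref{p extremals} together with $\|\dot\gamma(t)\|_{\gamma(t)}=\|u(t)\|_{1}$. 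The only cosmetic differences are your packaging of the adjoint through the identity $R_g\circ L_{\gamma(t)}=R_{\gamma(1)}\circ C_{\gamma(t)}$ and your citing Proposition~\ref{p puntuale} where strictly Definition~\ref{d extremals} already supplies the co-vector $\lambda_1$; neither changes the substance.
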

\begin{proof}
Let $\Bar{\lambda}\in T^*_{\gamma(1)}G$ be a co-vector for which \eqref{e normale} holds. Define $\lambda$ to be the right-invariant extension of $\Bar{\lambda}$. 
Since the flow along $u$ is $\phi^t=R_{\gamma(t)}$, see \eqref{e flow_is_right_translation}, there holds
\begin{equation*}
   \left(\phi^1_t\right)^*\Bar{\lambda}=\Bar{\lambda}\circ\df R_{\gamma(1)}\circ\df R_{\gamma(t)}^{-1}, \forall t\in[0,1].
   \label{e pull-back_in_Lie}
\end{equation*}
Consequently, equation \eqref{e normale} rewrites as
\begin{equation}
    \Bar{\lambda}\circ\df R_{\gamma(1)}\circ\df R_{\gamma(t)}^{-1}\in\partial_{\dot{\gamma}(t)}\E_{\gamma(t)}, \text{ for a.e. } t \in[0,1]. 
    \label{e normale_Lie-proof1}
\end{equation}
Being $\dot{\gamma}=\df L_{\gamma}u$ (see \eqref{e curve_associated_to_control_Carnot}) and being $\E$ left-invariant, equation \eqref{e normale_Lie-proof1} is equivalent to
\begin{equation*}
   \Bar{\lambda}\circ\df R_{\gamma(1)}\circ\df R_{\gamma(t)}^{-1}\circ\df L_{\gamma(t)}\in\partial_{u(t)}\E_{1}, \text{ for a.e. } t \in[0,1],
    \label{e normale_Lie-proof2}
\end{equation*}
which is exactly \eqref{e normalLie} being $\lambda$ the right-invariant extension of $\Bar{\lambda}$.
Equation \eqref{e beingextremalLie1} follows from \eqref{e beingextremal}, being $\Bar{\lambda}=\lambda_{\gamma(1)}$ and $||u||_{L^2}=\int_0^1||\dot{\gamma}(t)||_{\gamma(t)}^2\df t$.
\end{proof}
From Proposition~\ref{p extremalsLie} and Remark~\ref{o reparametrizing} we have that if $G$ is a Carnot group, $I\subseteq\mathbb{R}$ is an interval, $\gamma:I\rr G$ is a normal geodesic and $u:=\df L_{\gamma}^{-1}\dot{\gamma}$, then there exists a right-invariant co-vector $\lambda$ such that $\lambda\circ \Ad_{\gamma(t)}\in \partial_{u(t)}\E_{1}$ for every $t\in I$.
\begin{defi}
Let $G$ be a Carnot group, $I$ be an interval and $\gamma:I\rr G$ be a normal curve. Set $u:=\df L_{\gamma}^{-1}\dot{\gamma}$. We say that $\lambda\in T^*G$ is a \emph{co-vector associated to} $\gamma$ if it is right-invariant and \begin{equation}
        \lambda\circ \Ad_{\gamma(t)}\in \partial_{u(t)}\E_{1}, \text{ for a.e. } t\in I.
    \label{e normalLie2}
\end{equation}
\label{d right-inv-covector}
\end{defi}

\section{Proof of main results \label{s mainteo}}
\subsection{The differential of the dilations}
This sub-section is devoted to a computation of the differential of the one-parameter family $\delta$ of dilations in Carnot groups. The aim is to write the vector field 
\begin{equation}
\Vec{\delta}(g):=\frac{\df}{\df\tau}\delta_\tau(g)\bigg|_{\tau=1}
\label{e defdelta}
\end{equation}
as a linear combination of right-invariant vector-fields. Then, using the particular form of the coefficients of this combination, we will get an estimate of $\lambda(\Vec{\delta}(g))$ in terms of the distance $d(1,g)$ for every right-invariant co-vector $\lambda$.
\begin{defi}
Let $G$ be a Carnot group with stratified Lie algebra $\mathfrak{g}=V_1\oplus...\oplus V_s$. An ordered basis $X_1,...,X_n$ of $\mathfrak{g}$ is \emph{adapted to the stratification} if 
\begin{equation*}
    X_i\in V_j, \ \ \forall j\in{1,...,s}, \forall i\in\{1,...,n\} \text{ s.t. } \dim(V_{j-1})< i\leq \dim(V_j),
\end{equation*}
where we are setting $V_0:=\{0\}$.\\
We say that a vector $X\in \mathfrak{g}$ has \emph{degree of homogeneity} $j\in\{1,...,s\}$ if $X\in V_j$. 
%
%
%
%
%
%
We say that a function $f:G\rr \mathbb{R}$  is \emph{homogeneous} of degree $\alpha\in \mathbb{R}$ if $f\circ\delta_\tau=\tau^\alpha f$ for all $\tau\geq 0$. 
\end{defi}
   
\begin{lemma}
 Let $G$ be a Carnot group, fix a basis $X_1,...,X_m$  adapted to the stratification and let $\Vec{\delta}$ be as in (\ref{e defdelta}). Then
 \begin{equation}
 \Vec{\delta}=\sum_{i=1}^m P_iX_i^\dagger,
 \label{e differenzialedelta}    
 \end{equation}
 where, for each $i\in \{1,...,m\}$,
 we denote by $ X_i^\dagger $ the right-invariant extension of $X_i$ and $P_i:G\rr \mathbb{R}$ are homogeneous functions of degree $d_i$,
 the integer $d_i\in \mathbb{N}$ being the degree of homogeneity of $X_i$.
 \label{l differenziale delta}
\end{lemma}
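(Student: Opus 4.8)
The plan is to compute $\Vec{\delta}$ explicitly by understanding how dilations act in exponential coordinates and then re-express the resulting vector field in the right-invariant frame. The key structural fact is that the dilation $\delta_\tau$ is the Lie group automorphism whose differential at the identity multiplies each stratum $V_j$ by $\tau^j$. I would first work on the Lie algebra side: for $X_i \in V_{d_i}$, the pushforward $(\delta_\tau)_* X_i = \tau^{d_i} X_i$ at the level of left-invariant (or, via the exponential, global) data. The goal is to show that the coefficients $P_i$ in the expansion against the right-invariant frame $X_i^\dagger$ are homogeneous of degree exactly $d_i$.

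First I would fix exponential coordinates via $\exp:\mathfrak{g}\to G$, so that $\delta_\tau$ corresponds on $\mathfrak{g}$ to the linear map $\Delta_\tau := \bigoplus_j \tau^j \mathrm{Id}_{V_j}$. Writing a point $g = \exp(\sum_i x_i X_i)$, the curve $\tau \mapsto \delta_\tau(g) = \exp(\Delta_\tau \sum_i x_i X_i)$ is explicit, and differentiating at $\tau = 1$ gives $\Vec{\delta}(g) = \df\exp_{\xi}\!\left(\sum_i d_i x_i X_i\right)$ where $\xi = \sum_i x_i X_i$. This is a concrete tangent vector at $g$; the task is to decompose it against the right-invariant frame. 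The natural route is to use the right-invariant frame's relation to coordinate vector fields, i.e.\ to compute $P_i(g) = \theta^i_{\mathrm{right}}(\Vec{\delta}(g))$, where $\theta^i_{\mathrm{right}}$ is the dual right-invariant coframe, using the right Maurer--Cartan form.

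The homogeneity of $P_i$ is where I would spend the real effort, and I expect it to be the main point rather than a genuine obstacle. The clean argument is a \emph{scaling/naturality} computation: since $\delta_\tau$ is an automorphism, the vector fields $\Vec{\delta}$ and $X_i^\dagger$ transform predictably under $\delta_\tau$. Concretely, differentiating the defining relation of $\Vec{\delta}$ and using that the one-parameter group $\delta$ satisfies $\delta_\tau \circ \delta_\sigma = \delta_{\tau\sigma}$, one gets that $\Vec{\delta}$ is invariant under conjugation by dilations up to the scaling factor: $(\delta_\tau)_* \Vec{\delta} = \Vec{\delta}$ (as a generator of the flow, it commutes with its own flow). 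On the other hand, the right-invariant field $X_i^\dagger$ satisfies $(\delta_\tau)_* X_i^\dagger = \tau^{d_i} X_i^\dagger$, because $X_i$ has degree $d_i$ and $\delta_\tau$ is a homomorphism. Substituting the expansion $\Vec{\delta} = \sum_i P_i X_i^\dagger$ into $(\delta_\tau)_*\Vec{\delta} = \Vec{\delta}$ and matching coefficients then forces $P_i \circ \delta_\tau^{-1} \cdot \tau^{d_i} = P_i$, i.e.\ $P_i \circ \delta_\tau = \tau^{d_i} P_i$, which is exactly the claimed homogeneity of degree $d_i$.

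The only technical care needed is verifying the two pushforward identities rigorously: that $(\delta_\tau)_*\Vec{\delta} = \Vec{\delta}$ (from $\Vec{\delta}$ being the velocity field of the flow $\tau\mapsto\delta_\tau$ evaluated at $\tau=1$, combined with the group law of dilations) and that $(\delta_\tau)_* X_i^\dagger = \tau^{d_i} X_i^\dagger$ (from automorphism-invariance of right translations together with $(\delta_\tau)_*X_i = \tau^{d_i}X_i$ on the algebra). Once these are in hand, matching coefficients in a fixed frame yields the homogeneity of each $P_i$ with no further computation, and the decomposition \eqref{e differenzialedelta} itself follows since $\Vec{\delta}(g)\in T_gG$ and $\{X_i^\dagger(g)\}$ is a basis of $T_gG$ at every $g$. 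Thus the coefficients $P_i$ are well-defined smooth functions, and the scaling argument pins down their degree.
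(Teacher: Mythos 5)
Your proposal is correct and follows essentially the same route as the paper: the paper's proof establishes exactly your two pushforward identities, $(\delta_\tau)_*\Vec{\delta}=\Vec{\delta}$ (by the direct computation using $\delta_\tau\circ\delta_{1+\epsilon}\circ\delta_\tau^{-1}=\delta_{1+\epsilon}$) and $(\delta_\tau)_*X_i^\dagger=\tau^{d_i}X_i^\dagger$ (checking at $1_G$ and using that automorphisms preserve right-invariance), and then matches coefficients in the frame $\{X_i^\dagger\}$ to conclude $P_i\circ\delta_\tau=\tau^{d_i}P_i$. The exponential-coordinate computation you sketch at the start is harmless but unnecessary, since, as you note yourself, the scaling argument alone pins down the homogeneity once smoothness gives the decomposition.
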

 \begin{proof}

Fix $\tau>0$. For all $g\in G$ we have
\begin{equation}
    \left(\left(\delta_\tau\right)_*\Vec{\delta}\right)(g)=\frac{\df}{\df \epsilon}\delta_\tau\circ\delta_{1+\epsilon}(\delta_{\tau}^{-1}(g)) \bigg|_{\epsilon=0}=\frac{\df}{\df \epsilon}\delta_{1+\epsilon}(g) \bigg|_{\epsilon=0}=\Vec{\delta}(g).
    \label{e pushforwarddelta}
\end{equation}
Moreover, for all $i\in\{1,...,m\}$ there holds
\begin{equation}
    \left(\delta_\tau\right)_*X_i^{\dagger}=\tau^{d_i}X_i^{\dagger},
    \label{e pushforwardXi}
\end{equation}
since $\left(\delta_\tau\right)_*X_i^{\dagger}(1_G)=\tau^{d_i}X_i^{\dagger}(1_G)$ and $\left(\delta_\tau\right)_*X_i^{\dagger}$ is a right-invariant vector field, being $\delta_\tau$ a group homomorphism and $X_i^\dagger$ right-invariant.\\
Since $\Vec{\delta}$ is a smooth vector field there exist smooth functions $P_1,...,P_m:G\rr \mathbb{R}$ such that \eqref{e differenzialedelta} holds. To conclude the proof of the lemma we have to show that 
 \begin{equation}
     \label{e Pibeinghomogeneous}
      P_i\circ\delta_\tau=\tau^{d_i}P_i, \ \  \forall i\in \{1,...,m\}.
 \end{equation}
For all $i\in\{1,...,m\}$ we have
\begin{eqnarray*}
    \sum_{i=1}^m P_iX_i^{\dagger}&\stackrel{\eqref{e differenzialedelta}}{=}&\Vec{\delta} \\
    &\stackrel{\eqref{e pushforwarddelta}}{=}& \left(\delta_\tau\right)_*\Vec{\delta} \\
    &\stackrel{\eqref{e differenzialedelta}}{=}& \left(\delta_\tau\right)_*\left(\sum_{i=1}^m P_iX_i^{\dagger}\right)\\
    &\stackrel{\eqref{e pushforwardXi}}{=}& \sum_{i=1}^m \tau^{d_i}(P_i\circ \delta_{\tau}^{-1})X_i^{\dagger},
\end{eqnarray*}
and consequently \eqref{e Pibeinghomogeneous} holds.
 \end{proof} 
The identity found in Lemma~\ref{l differenziale delta} allows us to give an estimate of the value that we get applying a right-invariant co-vector to the vector $\Vec{\delta}(g)$ defined in (\ref{e defdelta}). Up to constants, the bound we find depends only on the co-vector and on the distance of $g$ from the origin. To get a quantitative estimate we fix a norm on the Lie algebra. 
\begin{defi}
Let $G$ be a Lie group and $\lambda$ a right-invariant co-vector. Choose a basis $\{X_1,...,X_n\}$ of $\,T_1G$ and define
\begin{equation}
    N(\lambda):=\sum_{i=1}^n|\lambda(X_i)|.
    \label{e normalambda}
\end{equation}
Up to a multiplicative constant
the norm $N$ is equivalent to every other norm on the space of right-invariant co-vectors.
\label{d normalambda}
\label{d Nlambda}
\end{defi}
\begin{lemma}
Let $G$ be a Carnot group. Define $\Vec{\delta}$ as in (\ref{e defdelta}). There exists a constant $C>0$ such that for every right-invariant co-vector $\lambda$ we have
\begin{equation}
    \lambda(\Vec{\delta}(g))\leq CN(\lambda)\max(d(1,g),d(1,g)^s), \qquad\forall g\in G,
    \label{e stimalambdasudelta}
\end{equation}
where $N(\lambda)$ is defined as in Definition~\ref{d normalambda}.
\label{l stimalambdasudelta}
\end{lemma}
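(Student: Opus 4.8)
The plan is to combine the decomposition from Lemma~\ref{l differenziale delta} with the homogeneity of the coefficient functions $P_i$ and a standard growth estimate for homogeneous functions on a Carnot group. Writing $\Vec{\delta}=\sum_{i=1}^m P_i X_i^\dagger$ and applying the right-invariant co-vector $\lambda$, I get
\begin{equation*}
    \lambda(\Vec{\delta}(g))=\sum_{i=1}^m P_i(g)\,\lambda(X_i^\dagger(g))=\sum_{i=1}^m P_i(g)\,\lambda(X_i),
\end{equation*}
where the last equality uses that $\lambda$ and $X_i^\dagger$ are both right-invariant, so $\lambda(X_i^\dagger(g))$ equals the constant $\lambda(X_i)$. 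Thus the quantity to estimate is a finite linear combination $\sum_i P_i(g)\lambda(X_i)$ of the homogeneous functions $P_i$, and by the triangle inequality its absolute value is at most $\bigl(\max_i|P_i(g)|\bigr)\sum_i|\lambda(X_i)|=\bigl(\max_i|P_i(g)|\bigr)N(\lambda)$.

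The remaining work is to bound each $|P_i(g)|$ by a constant times a power of $d(1,g)$. Here I would use that $P_i$ is homogeneous of degree $d_i$ with $1\le d_i\le s$: the function $g\mapsto P_i(g)/d(1,g)^{d_i}$ is invariant under dilations, hence constant along dilation orbits, so it attains its supremum on the unit sphere $\{d(1,g)=1\}$. Since the $P_i$ are smooth (in particular continuous) and the unit sphere is compact, this supremum is a finite constant $C_i$, giving $|P_i(g)|\le C_i\, d(1,g)^{d_i}$ for all $g\neq 1_G$ (and trivially at $g=1_G$ since $P_i(1_G)=0$ for $d_i\ge 1$). Taking $C_0:=\max_i C_i$ and noting that $d(1,g)^{d_i}\le\max(d(1,g),d(1,g)^s)$ for every exponent $d_i\in\{1,\dots,s\}$ — the smaller powers dominate when $d(1,g)\le 1$ and the larger powers dominate when $d(1,g)\ge 1$ — I conclude
\begin{equation*}
    |\lambda(\Vec{\delta}(g))|\le C_0\,N(\lambda)\max(d(1,g),d(1,g)^s),
\end{equation*}
which is the desired bound with $C:=C_0$ (and in particular the asserted inequality without the absolute value).

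The main obstacle I anticipate is justifying the compactness-and-homogeneity argument cleanly. One must check that the unit sphere $\{d(1,g)=1\}$ is indeed compact — this follows because the Carnot-Carath\'eodory distance induces the manifold topology and closed balls are compact, so the sphere is a closed subset of a compact ball — and that every $g\neq 1_G$ can be written as $\delta_\tau(h)$ for a unique $\tau=d(1,g)>0$ and some $h$ on the unit sphere, using that $d(1,\delta_\tau(h))=\tau\, d(1,h)$ by homogeneity of the distance. A minor subtlety is the behavior at the origin, where $P_i(1_G)=0$ for every $i$ since each $d_i\ge 1$ forces $P_i\circ\delta_0\equiv 0$; this makes the bound hold trivially there. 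Once these topological points are in place, the estimate is a routine consequence of homogeneity, and the constant $C$ depends only on the group $G$ and the chosen adapted basis, not on $\lambda$ or $g$.
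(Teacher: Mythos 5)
Your proof is correct and follows essentially the same route as the paper: decompose $\Vec{\delta}=\sum_i P_iX_i^\dagger$ via Lemma~\ref{l differenziale delta}, bound each homogeneous coefficient by $|P_i(g)|\le C_i\,d(1,g)^{d_i}$, and sum using the definition of $N(\lambda)$ together with $d(1,g)^{d_i}\le\max(d(1,g),d(1,g)^s)$. The only difference is that you spell out the compactness-of-the-unit-sphere justification for the bound $|P_i(g)|\le C_i\,d(1,g)^{d_i}$ (and carry absolute values), which the paper simply asserts as a consequence of homogeneity --- a welcome, correct filling-in of detail rather than a different argument.
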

\begin{proof}
Choose a basis $X_1,...,X_n$ adapted to the stratification; call $\{X_i^\dagger\}_{i\in\{1,...,n\}}$ the right-invariant extension of this basis. Let $d_i\in\mathbb{N}$ be the degree of homogeneity of $X_i$. From Lemma~\ref{l differenziale delta} we have that \eqref{e differenzialedelta} holds. For all $i\in\{1,...,n\}$, being the function $P_i$ homogeneous of degree $d_i$, there exist a constant $C_i>0$ such that
\begin{equation}
   P_i(g)\leq C_i d(1,g)^{d_i}, \ \ \forall g \in G.
   \label{e inequalitihomogeneity}
\end{equation}
Setting $C=\max_{1,...,n}C_i$, for every right-invariant co-vector $\lambda$, by definition of $N(\lambda)$, we have
\begin{equation}
    \lambda(\vec{\delta}(g))\stackrel{\eqref{e differenzialedelta}}{=}\sum_{i=1}^n  P_i(g)\lambda(X_i)\stackrel{\eqref{e normalambda}, \eqref{e inequalitihomogeneity}}{\leq} CN(\lambda)\max(d(1,g),d(1,g)^s), \ \ \forall g\in G.
    \label{e stimadelta2}
\end{equation}
This concludes the proof of the lemma.
\end{proof}
\subsection{Proof of Theorem~\ref{t fugacptquant}}
We are now ready to prove the main result of this paper.
\begin{proof}[Proof of Theorem~\ref{t fugacptquant}]
Let $\gamma:\mathbb{R}\rr G$ be a normal curve parametrized by arc-length. Without loss of generality we can assume $t'=0$, $t>0$ and $\gamma(0)=1_G$.
Since all norms on the space of right-invariant co-vectors are equivalent, we don't lose generality assuming that $N$ is the norm defined by \eqref{e normalambda}.
Choose a 
co-vector $\lambda$ associated to $\gamma$ 
(see Definition~\ref{d right-inv-covector}). Denote with $V_1$ the first stratum of the stratification of $\Lie(G)$ and for $t>0$ denote with $u_t\in L^2([0,1],V_1)$ the control of the curve $\gamma_t:[0,1]\rr G$, $\gamma_t(\tau):=\gamma|_{[0,t]}(t\tau)$ for all $\tau\in[0,1]$.
For every $t>0$, we have
\begin{equation}
    \df\End_{u_t} u_t=\frac{\df}{\df\tau}\End(\tau u_t)\bigg|_{\tau=1}.
    \label{e 1.3mainteo}
\end{equation}
Being $\tau u_t$ the control of $\delta_\tau \gamma_t $,
we can rewrite (\ref{e 1.3mainteo}) as 
\begin{equation}
     \df \End_{u_t} u_t= \frac{\df}{\df\tau}\delta_\tau(\End(u_t))\bigg|_{\tau=1}=\Vec{\delta}(\End(u_t)), \ \  \forall t>0,
     \label{e 1.6mainteo}
\end{equation}
where $\vec{\delta}$ is defined by \eqref{e defdelta}.
Applying Lemma~\ref{l stimalambdasudelta} we have that there exists a constant $C>0$ such that
\begin{equation}
    \lambda(\Vec{\delta}(\End(u_t)))\leq C 
    N(\lambda)\max(d(1,\End(u_t)),d(1,\End(u_t))^s), \ \ \forall t>0.
    \label{e 2mainteo}
\end{equation}
By \eqref{e 1.6mainteo} and \eqref{e 2mainteo}, and being $\End(u_t)=\gamma(t)$ for all $t>0$, we have
\begin{equation}
    \lambda(\df \End_{u_t} u_t)\leq C
    N(\lambda)\max(d(1,\gamma(t)),d(1,\gamma(t))^s), \ \ \forall t>0.
    \label{e 3mainteo}
\end{equation}
Moreover, we have from Remark~\ref{o reparametrizing} and Proposition~\ref{p extremalsLie} that
\begin{equation}
    t\lambda(\df \End_{u_t} u_t)=\int_0^1||t\dot{\gamma}(tx)||^2\df x=t^2, \ \ \forall t>0. 
    \label{e 1mainteo}
\end{equation}
From equations (\ref{e 3mainteo}) and (\ref{e 1mainteo})  we get for every $t>0$ that
\begin{equation}
    t\leq C
    N(\lambda)\max(d(1,\gamma(t)),d(1,\gamma(t))^s).
    \label{e 4mainteo}
\end{equation}
In particular, for $t>C
N(\lambda)$ we must have \begin{equation*}
    d(1,\gamma(t))>1,
\end{equation*}
and therefore
\begin{equation*}
    \max(d(1,\gamma(t))),d(1,\gamma(t))^s)=d(1,\gamma(t))^s.
\end{equation*}
Thus, for all $t>C
N(\lambda)$, Equation (\ref{e 4mainteo}) becomes 
\begin{equation}
  t\leq C
  N(\lambda)d(1,\gamma(t))^s.
  \label{e 5mainteo}
\end{equation}
Set $\epsilon:=C^{-\frac{1}{s}}$. 
If $t>C
N(\lambda)$ equation \eqref{e notinball} is a conseqeuence of \eqref{e 5mainteo}. If $t\leq C
N(\lambda)$ equation \eqref{e notinball} is trivially true being the right-hand side less or equal than $0$. We showed that \eqref{e notinball} holds for every $t>0$, thus we concluded the proof of the theorem.
\end{proof}

\subsection{Proof of Corollary~\ref{c noperiodic0}}
This section is devoted to the proof of Corollary~\ref{c noperiodic0}. We will actually prove a stronger statement:
\begin{teo}
In self-similar spaces, normal loops starting from the origin are constant. 
\label{c noperiodic}
\end{teo}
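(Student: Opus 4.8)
The plan is to reproduce, in the self-similar setting, the dilation argument already used for subRiemannian Carnot groups, with the origin playing the role of the identity $1_G$ and the quotient dilations playing the role of the Carnot dilations. Throughout, write $M := \mfaktor{H}{G}$, let $\pi : G \rr M$ be the projection, let $o := \pi(1_G)$ be the origin, and let $\bar\delta_\tau$ denote the dilations induced on $M$. The essential new ingredient is to transport the identity $\End(\tau u) = \delta_\tau(\End(u))$ from $G$ to $M$. For this I would first fix a convenient frame: take a left-invariant frame $X_1, \dots, X_m$ of the first stratum of $G$ and set $\bar X_i := \pi_* X_i$. These are well-defined vector fields on $M$ (left $H$-invariance of the $X_i$, which holds since they are left-invariant on all of $G$, makes them projectable), and they span $\pi_*\Delta$, so they form an admissible frame for the quotient subFinsler structure.

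The first key step is to check that this frame is homogeneous of degree $1$ under the quotient dilations, i.e.\ $(\bar\delta_\tau)_* \bar X_i = \tau \bar X_i$. This follows from the identity $(\delta_\tau)_* X_i = \tau X_i$ on $G$ together with the intertwining relation $\bar\delta_\tau \circ \pi = \pi \circ \delta_\tau$, which is precisely the property that makes the dilations descend to the quotient. With this homogeneity in hand, the very computation used for Carnot groups shows that if $\gamma_u$ is the curve in $M$ with control $u$ issued from $o$, then $\bar\delta_\tau \circ \gamma_u$ is the curve with control $\tau u$ issued from $\bar\delta_\tau(o) = o$; equivalently,
\[
\End(\tau u) = \bar\delta_\tau(\End(u)), \qquad \forall \tau > 0,
\]
where $\End$ is the end-point map based at $o$. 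Alternatively, one obtains this by projecting the lifted curve through the identity $\End_M(u) = \pi(\End_G(u))$.

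I would then specialize to a normal loop $\gamma : [0,1] \rr M$ with $\gamma(0) = \gamma(1) = o$ and control $u$, and let $\lambda \in T^*_o M$ be a co-vector for which the normal equation \eqref{e normale} holds. Since the origin is the unique fixed point of the dilations, $\End(u) = \gamma(1) = o$ forces $\End(\tau u) = \bar\delta_\tau(o) = o$ for every $\tau > 0$. Differentiating this constant path at $\tau = 1$ yields
\[
\df\End_u u = \frac{\df}{\df\tau}\End(\tau u)\bigg|_{\tau=1} = 0 .
\]
Plugging this into Corollary~\ref{p extremals} gives $\int_0^1 \|\dot\gamma(t)\|^2 \df t = \lambda(\df\End_u u) = 0$, whence $\dot\gamma \equiv 0$ and $\gamma$ is constant.

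The argument is short once the setup is correct, and the main obstacle is precisely the bookkeeping of the frame and dilations on the quotient: verifying that the $X_i$ descend to a genuine frame and, above all, that the descended frame remains degree-$1$ homogeneous, so that the dilation equivariance of the end-point map survives the passage to $M$. The remaining crucial point, which is what makes the loop hypothesis usable, is that the loop is based at the origin---the only dilation-fixed point---since this is what turns $\tau \mapsto \End(\tau u)$ into a constant path and thereby kills $\df\End_u u$.
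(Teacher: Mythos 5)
Your proof is correct, but it takes a genuinely different route from the paper's. The paper goes \emph{up}: it first proves a lifting lemma (Lemma~\ref{l lift}) showing that a normal curve in $\mfaktor{H}{G}$ lifts to a normal curve $\Tilde{\gamma}$ in $G$ admitting a co-vector $\Tilde{\lambda}$ annihilating $\ker(\df\pi_{\Tilde{\gamma}(1)})$ --- the nontrivial point being the sub-differential inclusion \eqref{e subdifferentialinclusion} that transports the normal equation upstairs. In $G$ the dilation argument then gives $\df\End_u u=\Vec{\delta}(\Tilde{\gamma}(1))\in T_{\Tilde{\gamma}(1)}H$, a vector that is in general \emph{nonzero} but is killed by the specially constructed $\Tilde{\lambda}$, whence $||u||_{L^2}=0$. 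You instead go \emph{down}: you descend the frame and the dilations to the quotient, verify that $\bar{X}_i:=\pi_*X_i$ is well defined (left-invariance of $X_i$ gives invariance under the left $H$-action, hence projectability) and degree-$1$ homogeneous under the quotient dilations, and deduce the equivariance $\End(\tau u)=\bar{\delta}_\tau(\End(u))$ directly in $\mfaktor{H}{G}$; since the loop is based at the dilation-fixed origin, $\tau\mapsto\End(\tau u)$ is a constant path, so $\df\End_u u=0$ on the nose and \emph{any} co-vector satisfying \eqref{e normale} closes the argument via Corollary~\ref{p extremals}. What each approach buys: yours avoids Lemma~\ref{l lift} entirely and is a verbatim transplant of the introduction's warm-up argument, at the modest cost of checking that the quotient data behave as expected --- in particular that Definition~\ref{def_selfsimilar} really yields a subFinsler manifold to which Proposition~\ref{p differentialend} and Corollary~\ref{p extremals} apply, which the paper grants by construction; the paper's route is heavier for this one theorem, but the lifting lemma is a statement of independent interest and keeps all dilation computations in the group, where $\Vec{\delta}$ has already been analyzed. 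One bookkeeping point to make explicit in your write-up: normality of $\gamma$, its control $u$, and the end-point map in Corollary~\ref{p extremals} must all be taken with respect to the same frame $\{\bar{X}_1,\dots,\bar{X}_m\}$; this is exactly the frame the paper itself uses on $\mfaktor{H}{G}$ in the proof of Lemma~\ref{l lift}, so your choice is consistent with its conventions.
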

We start showing that normal curves in self-similar spaces can be lifted to normal curves in Carnot groups.
\begin{lemma}
Let $G$ be a Carnot group with stratified Lie algebra $\mathfrak{g}=V_1\oplus...\oplus V_s$ and let $H<G$ be a dilation-invariant subgroup, with $\Lie(H)\cap V_1=\{0\}$. Denote with $\pi:G\rr \mfaktor{H}{G}$ the projection. Let $\gamma:[0,1]\rr \mfaktor{H}{G}$ be a normal curve. For every $g\in \pi^{-1}(\gamma(0))$ there exists a normal curve $\Tilde{\gamma}:[0,1]\rr G$ such that $\Tilde{\gamma}(0)=g$ and $\pi\circ \Tilde{\gamma}=\gamma$. Moreover, we can choose a co-vector $\Tilde{\lambda}\in (T_{\Tilde{\gamma}(1)}G)^*$  associated to $\Tilde{\gamma}$ such that
\begin{equation}
    \Tilde{\lambda}(\ker (\df\pi_{\Tilde{\gamma}(1)}))=\{0\}.
    \label{e tildelambdaonh}
\end{equation}
\label{l lift}
\end{lemma}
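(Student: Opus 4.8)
The plan is to lift $\gamma$ to $G$ by integrating the \emph{same control}, and to take for $\Tilde{\lambda}$ the pull-back of the quotient co-vector through $\df\pi$; the whole argument then rests on a single structural fact, that $\df\pi_q$ restricts to a linear isometry between the distributions. I would start by fixing a left-invariant frame $X_1,\dots,X_m$ of the first stratum $V_1$. Since each $X_i$ is left-invariant it is in particular invariant under the left action of $H$, hence it descends to a well-defined vector field $\bar X_i:=\pi_*X_i$ on $\mfaktor{H}{G}$, and $\{\bar X_i\}$ spans $\Delta_{\mfaktor{H}{G}}=\pi_*\Delta$ at every point.

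The key step, which I expect to be the main obstacle, is to show that for every $q\in G$ the map $\df\pi_q\colon\Delta_q\rr\Delta_{\pi(q)}$ is a linear isometry, where $\Delta_q$ carries the norm of $G$ and $\Delta_{\pi(q)}$ the quotient norm \eqref{d defnormadown}. Surjectivity holds by the definition of $\Delta_{\mfaktor{H}{G}}$. For injectivity I would identify $\ker\df\pi_q=\df R_q(\Lie(H))$ with the tangent space to the fiber $Hq$, so that a vector in $\Delta_q\cap\ker\df\pi_q$ corresponds, after applying $\df L_{q^{-1}}$, to an element of $V_1\cap\Ad_{q^{-1}}\Lie(H)$; because $H$ is dilation-invariant its Lie algebra is graded, and the hypothesis $\Lie(H)\cap V_1=\{0\}$ forces $\Lie(H)\subseteq V_2\oplus\dots\oplus V_s$. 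Since $\ad$ strictly raises the degree of homogeneity, the lowest-degree component of $\Ad_{q^{-1}}Y$ equals that of $Y$ and hence lies in $V_2\oplus\dots\oplus V_s$ for every $0\neq Y\in\Lie(H)$; thus $\Ad_{q^{-1}}Y\in V_1$ forces $Y=0$, giving $\Delta_q\cap\ker\df\pi_q=\{0\}$. That $\df\pi_q$ is an isometry then follows from \eqref{d defnormadown}: the left translations by elements of $H$ are isometries of $G$ that permute each fiber and commute with $\pi$, so the infimum defining the quotient norm is constant along the fiber and attained at $q$, where injectivity makes the horizontal lift of a given vector unique; hence $\|\df\pi_q(w)\|_{\mfaktor{H}{G}}=\|w\|$ for all $w\in\Delta_q$.

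With the isometry available I would construct the lift. Let $u\in L^2([0,1],\mathbb{R}^m)$ be a control of $\gamma$ for which \eqref{e normale} holds with some $\lambda\in T^*_{\gamma(1)}\mfaktor{H}{G}$, and let $\Tilde{\gamma}$ solve \eqref{e curvessociatedtocontrol} for the frame $X_1,\dots,X_m$ with control $u$ and $\Tilde{\gamma}(0)=g$. Since $\pi_*X_i=\bar X_i$, the curve $\pi\circ\Tilde{\gamma}$ solves the same equation as $\gamma$ with the same initial datum, so $\pi\circ\Tilde{\gamma}=\gamma$ by uniqueness, $\Tilde{\gamma}$ is horizontal, and the associated flows satisfy $\pi\circ\phi_t^1=\bar\phi_t^1\circ\pi$, where $\bar\phi$ denotes the flow along $u$ on $\mfaktor{H}{G}$. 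I would then set $\Tilde{\lambda}:=(\df\pi_{\Tilde{\gamma}(1)})^*\lambda$; then \eqref{e tildelambdaonh} is immediate, because $\df\pi_{\Tilde{\gamma}(1)}$ annihilates $\ker\df\pi_{\Tilde{\gamma}(1)}$.

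It remains to check that $\Tilde{\gamma}$ is normal with co-vector $\Tilde{\lambda}$. Differentiating $\pi\circ\phi_t^1=\bar\phi_t^1\circ\pi$ and using $\Tilde{\lambda}=\lambda\circ\df\pi_{\Tilde{\gamma}(1)}$, the restriction to $\Delta_{\Tilde{\gamma}(t)}$ satisfies, for a.e.\ $t$,
\begin{equation*}
    \left(\phi_t^1\right)^*\Tilde{\lambda}\,\big|_{\Delta_{\Tilde{\gamma}(t)}}=\left[\left(\bar\phi_t^1\right)^*\lambda\right]\circ\left(\df\pi_{\Tilde{\gamma}(t)}\big|_{\Delta_{\Tilde{\gamma}(t)}}\right).
\end{equation*}
Writing $T:=\df\pi_{\Tilde{\gamma}(t)}|_{\Delta_{\Tilde{\gamma}(t)}}$, which is a linear isometry sending $\dot{\Tilde{\gamma}}(t)$ to $\dot{\gamma}(t)$, and recalling $(\bar\phi_t^1)^*\lambda\in\partial_{\dot{\gamma}(t)}\E_{\gamma(t)}$ by \eqref{e normale}, I would conclude $T^*\big((\bar\phi_t^1)^*\lambda\big)\in\partial_{\dot{\Tilde{\gamma}}(t)}\E_{\Tilde{\gamma}(t)}$, since precomposition with a linear isometry maps sub-differentials of $\tfrac12\|\cdot\|^2$ to sub-differentials of $\tfrac12\|\cdot\|^2$. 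Hence \eqref{e normale} holds for $\Tilde{\gamma}$ and $\Tilde{\lambda}$, so $\Tilde{\gamma}$ is a normal curve and, by Proposition~\ref{p extremalsLie}, $\Tilde{\lambda}$ is a co-vector associated to it in the sense of Definition~\ref{d right-inv-covector}.
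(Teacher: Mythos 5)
Your proof is correct and follows essentially the same route as the paper: lift $\gamma$ by integrating the same control (so the flows upstairs and downstairs commute with $\pi$), set $\Tilde{\lambda}:=\lambda\circ \df\pi_{\Tilde{\gamma}(1)}$, and transfer the subdifferential condition \eqref{e normale} through $\df\pi$, which makes \eqref{e tildelambdaonh} immediate. The only divergence is in the norm comparison: where you establish that $\df\pi_q|_{\Delta_q}$ is a linear isometry (via gradedness of $\Lie(H)$ and the fact that $\Ad_{q^{-1}}$ preserves the lowest-degree component, forcing $\Delta_q\cap\ker\df\pi_q=\{0\}$), the paper gets by with the weaker pair of facts that $\|\df\pi_q(w)\|_{\mfaktor{H}{G}}\leq\|w\|_G$ by definition of the quotient norm together with equality along the lifted frame, its equation \eqref{e choiceofu} — and your explicit injectivity argument in fact supplies the detail that the paper leaves implicit when asserting that equality.
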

\begin{proof}
Denote with $\Delta$ the distribution of $G$ and with $\Bar{\Delta}$ the one of $\mfaktor{H}{G}$, see Definition~\ref{def_selfsimilar}
. Fix a left-invariant frame $\{X_1,...,X_m\}$ such that $\Span\{X_1(g),...,X_m(g)\}=\Delta_g$ for all $g\in G$. We have $\Span\{\df\pi_g X_1(g),...,\df\pi_g X_m(g)\}=\df\pi_g \Delta_g =\Bar{\Delta}_{\pi(g)} $ for all $g\in G$, thus we can see $\Omega:=L^2([0,1],\mathbb{R}^m)$ both as the set of controls of curves in $G$ associated to $\{X_1,...,X_m\}$ and as the set of controls of curves in $\mfaktor{H}{G}$ associated to $\{\pi_*X_1,...,\pi_*X_m\}$.
%
%
%
%
%
We remark that by definition of $||\cdot||_{\mfaktor{H}{G}}$ in (\ref{d defnormadown}), and being the norm on $\Delta$ left-invariant, 
we have for every $p\in \mfaktor{H}{G}$, every $q \in \pi^{-1}(p)$ and every $v\in\mathbb{R}^m$ that \begin{equation*}
  ||\sum_{i=1}^m v_i\left(\pi_*X_i\right)(p) ||_{\mfaktor{H}{G}}  =||\sum_{i=1}^m v_iX_i(q) ||_G
\end{equation*}
Thus for the control $u\in \Omega$ of $\gamma$ we have
\begin{equation}
    ||\dot{\gamma}(t)||_{\mfaktor{H}{G}}=||\sum_{i=1}^m u_i(t)X_i(q) ||_G, \text{ for a.e. } t\in[0,1], \forall q\in \pi^{-1}(\gamma(t)).
    \label{e choiceofu}
\end{equation}
To simplify the notation we write $u(t)\cdot X(q)$ for $\sum_{i=1}^m u_i(t)X_i(q)$.
We claim that for a.e. $t\in [0,1]$, for all $q\in \pi^{-1}(\gamma(t))$ and for all linear functions $a:T_{\gamma(t)}(\mfaktor{H}{G})\rr \mathbb{R}$ there holds
\begin{equation}
    a\in \partial_{\dot{\gamma}(t)}\left(\frac{\left(||\cdot||_{\mfaktor{H}{G}}\right)_{\gamma(t)}^2}{2}\right)\implies a\circ \df\pi_q \in \partial_{u(t)\cdot X(q)}\left(\frac{\left(||\cdot||_G\right)_{q}^2}{2}\right).
    \label{e subdifferentialinclusion}
\end{equation}
Indeed, by definition of sub-differential if $a\in \partial_{\dot{\gamma}(t)}\left(\frac{\left(||\cdot||_{\mfaktor{H}{G}}\right)_{\gamma(t)}^2}{2}\right)$ and $q\in \pi^{-1}(\gamma(t))$, then
\begin{equation}
   \begin{split}
           \frac{||\dot{\gamma}(t)||_{\mfaktor{H}{G}}^2}{2}
           \leq \frac{||\df\pi_q(w)||^2_{\mfaktor{H}{G}}}{2}-a(\df\pi_q(w)-\dot{\gamma}(t)), \forall w\in T_qG.
   \end{split}
 \end{equation}
 Consequently, by (\ref{e choiceofu}) we have
 \begin{equation}
     \frac{||u(t)\cdot X(q)||_G^2}{2}\leq \frac{||\df\pi_q(w)||^2_{\mfaktor{H}{G}}}{2}-a(\df\pi_q(w)-\dot{\gamma}(t)), \forall w\in T_qG.
     \label{e bo}
 \end{equation}
 By definition of $||\cdot||_{\mfaktor{H}{G}}$ we have $||\df\pi_q(w)||^2_{\mfaktor{H}{G}}\leq  ||w||_G^2$ and therefore Equation (\ref{e bo}) implies
 \begin{equation}
      \frac{||u(t)\cdot X(q)||_G^2}{2}\leq \frac{||w||_G^2}{2}-(a\circ\df\pi_q)(w-u(t)\cdot X(q)), \forall w\in T_qG,
 \end{equation}
 which proves the claim (\ref{e subdifferentialinclusion}).\\
If we call $\Tilde{\phi}^t$ the flow along $u$ in $G$ and $\phi^t$ the flow along $u$ in $\mfaktor{H}{G}$, we have that 
\begin{equation}
    \pi\circ \Tilde{\phi}^t=\phi^t\circ \pi, \ \ \forall t\in[0,1],
    \label{e flowcommuteswithprojection}
\end{equation}
since for every $q\in G$ both $\pi\circ \Tilde{\phi}^t(q)$ and $\phi^t\circ \pi(q)$ solve the differential equation
\begin{equation*}
    \begin{cases}
    \sigma'(t)=\sum_{i=1}^m u_i(t)\pi_*X_i(\sigma(t));\\
    \sigma(0)=\pi(q).
    \end{cases}
\end{equation*}
In particular, for all $g\in \pi^{-1}(\gamma(0))$, the curve $\Tilde{\gamma}:[0,1]\rr G$, defined as $\Tilde{\gamma}(t):=\Tilde{\phi}^t(g)$ for all $t\in[0,1]$, is such that $\pi\circ \Tilde{\gamma}=\gamma$ and $\Tilde{\gamma}(0)=g$.\\
Being $\gamma$ normal there exists $\lambda\in T^*_{\gamma(1)}(\mfaktor{H}{G})$ such that
\begin{equation}
   \left(\phi_t^1\right)^* \lambda \in \partial_{\dot{\gamma}(t)}\left(\frac{\left(||\cdot||_{\mfaktor{H}{G}}\right)_{\gamma(t)}^2}{2} \right), \text{ for a.e. } t\in [0,1].
    \label{e normalforgamma}
\end{equation}
We prove that $\Tilde{\gamma}$ is normal showing that for 
$\Tilde{\lambda}:=\lambda\circ \df\pi_{\Tilde{\gamma}(1)}$ there holds
\begin{equation}
    \left(\Tilde{\phi}_t^1\right)^*\Tilde{\lambda}\in \partial_{\dot{\Tilde{\gamma}}(t)}\left(\frac{\left(||\cdot||_G\right)_{\Tilde{\gamma}(t)}^2}{2}\right), \text{ for a.e. } t\in [0,1].
\end{equation}
%
Indeed, we know from (\ref{e subdifferentialinclusion}) and \eqref{e normalforgamma} that
\begin{equation}
  \left(\left(\phi_t^1\right)^*\lambda\right)\circ \pi_* \in \partial_{\dot{\Tilde{\gamma}}(t)}\left(\frac{\left(||\cdot||_G\right)_{\Tilde{\gamma}(t)}^2}{2}\right), \text{ for a.e. } t\in [0,1],
  \label{e mainteo-qf}
\end{equation}
and by (\ref{e flowcommuteswithprojection}) we have
\begin{equation}
   \left(\Tilde{\phi}_t^1\right)^*\Tilde{\lambda}= \left(\left(\phi_t^1\right)^*\lambda\right)\circ \pi_*, \ \ \forall t\in[0,1].
\end{equation}
Finally, Equation (\ref{e tildelambdaonh}) come trivially from the definition of $\Tilde{\lambda}$.
\end{proof}

We are now ready to prove that in self-similar spaces normal loops starting from the origin are constant.
\begin{proof}[Proof of Theorem~\ref{c noperiodic}]
Let $G$ be a Carnot group and $H<G$ be a subgroup invariant under the action of dilations, with $\Lie(H)\cap V_1=\{0\}$. Denote with $\pi:G\rr \mfaktor{H}{G}$ the canonical projection. Let $\gamma:[0,1]\rr \mfaktor{H}{G}$ be a normal curve such that $\gamma(0)=\gamma(1)=\pi(1_G)$. From Lemma~\ref{l lift} we know there exists exists a normal curve $\Tilde{\gamma}:[0,1]\rr G$ with control $u$ such that $\pi\circ\Tilde{\gamma}=\gamma$ and $\Tilde{\gamma}(0)=1_G$. In particular $\Tilde{\gamma}(1)\in H$ and thus  \begin{equation}
\label{formula_end_d}
\df\End_u u=\frac{\df}{\df \tau}\delta_\tau(\Tilde{\gamma}(1)) \bigg|_{\tau=1}\in T_{\Tilde{\gamma}(1)}H,
\end{equation}
being $H$ dilation invariant. On the other hand, by (\ref{e tildelambdaonh}) we know that $\Tilde{\gamma}$ solves Equation (\ref{e beingextremalLie1}) with respect to a covector $\Tilde{\lambda}\in T^*_{\Tilde{\gamma}(1)}G$ such that $\Tilde{\lambda}(T_{\Tilde{\gamma}(1)}H)=\{0\}$, thus
\begin{equation}
    ||u||_{L^2}=\Tilde{\lambda}(\df\End_u u)=0.
\end{equation}
We proved that $\Tilde{\gamma}$ is constant and thus $\gamma$ is constant. Since $\gamma$ was an arbitrary normal loop starting from the origin, we concluded the proof of the theorem.
\end{proof}




\section{Some examples}
\subsection{Curves with end-point arbitrarely close to the origin}
Even if by Theorem~\ref{c noperiodic} we know that in Carnot groups non-constant normal loops don't exists, we can find normal curves of length $1$ with end-point arbitrarily close to the origin. We present a simple example in the Heisemberg group.
\begin{defi}
The \emph{3-dimensional Heisemberg group} is the Carnot group $(\mathbb{R}^3,\cdot)$ with product law
\begin{equation*}
    (x,y,z)\cdot(x',y',z'):=(x+x',y+y',z+z'-\frac{1}{2}(x'y-xy')), \forall x,y,z,x',y',z'\in \mathbb{R}.
\end{equation*}
In the Heisemberg group we choose as orthonormal left-invariant frame of the first stratum the two vector fields $\{X,Y\}$ defined for all $x,y,z\in \mathbb{R}$ by
\begin{eqnarray*}
    X(x,y,z):=\partial_x-\frac{y}{2}\partial_z;\\ Y(x,y,z):=\partial_y+\frac{x}{2}\partial_z.
\end{eqnarray*}
\label{d Heisemberg_group}
\end{defi}
\begin{prop}
Let $G:=(\mathbb{R}^3,\cdot)$ be the subRiemannian 3-dimensional Heisemberg group. For every $\epsilon>0$ there exists a normal curve $\gamma:[0,1]\rr \mathbb{R}^3$, parametrized by arclength, such that $\gamma(0)=1_G$ and $d(\gamma(1),1_G)<\epsilon$.
\end{prop}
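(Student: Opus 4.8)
The plan is to exploit \emph{non-minimizing} normal geodesics that wind many times around a small circle. Such a curve always has length $1$, but as the number of windings grows its horizontal projection encloses a total signed area tending to $0$; since the endpoint of a closed horizontal loop lies on the vertical $z$-axis at height equal to that enclosed area, the endpoints will converge to $1_G$. The conceptual point, and the reason the statement is not contradicted by Theorem~\ref{t fugacptquant}, is that Definition~\ref{d extremals} asks only for \eqref{e normale}, not for length-minimality, so we are free to use highly non-minimizing normal geodesics.

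Concretely, for each $n\in\mathbb{N}$ I would take the control $u_n\in L^2([0,1],V_1)$ given in the frame $\{X,Y\}$ by
\begin{equation*}
  u_n(t)=\big(\cos(2\pi n t),\,\sin(2\pi n t)\big),\qquad t\in[0,1],
\end{equation*}
and let $\gamma_n$ be the associated curve with $\gamma_n(0)=1_G$. Since $\lvert u_n(t)\rvert\equiv 1$ and $\{X,Y\}$ is orthonormal, $\gamma_n$ is parametrized by arclength. The first thing to verify is that $\gamma_n$ is a normal curve, which I would do through Proposition~\ref{p extremalsLie} by exhibiting a right-invariant co-vector $\lambda$ with $\lambda\circ\Ad_{\gamma_n(t)}\in\partial_{u_n(t)}\E_{1}$ for a.e.\ $t$. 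Because $\E_{1}=\tfrac12\lvert\cdot\rvert^2$ is smooth, each sub-differential is the single functional $\langle u_n(t),\cdot\rangle$, and the normal equation of the Heisenberg group is classically equivalent to the control rotating at constant angular speed, $\dot u_n=\omega\,J u_n$ with vertical momentum $\omega=2\pi n$ and $J$ the rotation by $\pi/2$; this is exactly the well-known integration of Heisenberg geodesics, and $u_n$ manifestly solves it.

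Next I would integrate \eqref{e curve_associated_to_control_Carnot}. Writing $\gamma_n=(x,y,z)$ and $\omega=2\pi n$, one obtains
\begin{equation*}
  x(t)=\tfrac{1}{\omega}\sin(\omega t),\qquad y(t)=\tfrac{1}{\omega}\big(1-\cos(\omega t)\big),\qquad \dot z=\tfrac12\,(x\dot y-y\dot x),
\end{equation*}
so the horizontal projection is a circle of radius $1/\omega=\tfrac{1}{2\pi n}$ through the origin. Since $\omega\in 2\pi\mathbb{Z}$ we get $x(1)=y(1)=0$, i.e.\ the projection closes up after $n$ full loops, and a direct computation of $z(1)=\tfrac12\int_0^1(x\dot y-y\dot x)\,\df t$ gives $z(1)=\tfrac{1}{4\pi n}$, whence $\gamma_n(1)=\big(0,0,\tfrac{1}{4\pi n}\big)$.

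Finally, by homogeneity of the distance under the dilations $\delta_\tau$, which send $(0,0,z)$ to $(0,0,\tau^2 z)$, we have $d\big(1_G,(0,0,z)\big)=\sqrt{\lvert z\rvert}\,d\big(1_G,(0,0,1)\big)$ for $z>0$, so
\begin{equation*}
  d\big(\gamma_n(1),1_G\big)=\frac{d\big(1_G,(0,0,1)\big)}{\sqrt{4\pi n}},
\end{equation*}
which tends to $0$ as $n\to\infty$. Given $\epsilon>0$ it then suffices to pick $n$ so large that the right-hand side is below $\epsilon$ and set $\gamma:=\gamma_n$. The main obstacle is not the (routine) computation or limit but the structural realization that one must abandon minimizers and use multiply-wound circles, together with confirming that such a curve genuinely satisfies \eqref{e normale} on the whole of $[0,1]$ for a single co-vector; once that is granted, the length-versus-area scaling $r_n\sim 1/n$ with enclosed area $\sim n\cdot r_n^2\sim 1/n$ finishes the argument.
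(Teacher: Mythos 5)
Your proposal is correct and is essentially the paper's own proof: both take the horizontal lift of a circle of radius $\frac{1}{2\pi n}$ traversed $n$ times (a non-minimizing normal geodesic, with normality justified by the classical integration of Heisenberg geodesics), arriving at the same endpoint $\left(0,0,\frac{1}{4\pi n}\right)\to 1_G$. Your explicit dilation computation $d\left(1_G,(0,0,z)\right)=\sqrt{|z|}\,d\left(1_G,(0,0,1)\right)$ is a minor refinement of the paper's concluding step, but the argument is the same.
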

\begin{proof}
For every $N\in\mathbb{N}$, in the coordinates of Definition~\ref{d Heisemberg_group}, consider the curve $\gamma_N:[0,1]\rr G$ defined as $\gamma_N(t):=(x_N(t),y_N(t),z_N(t))$ with
\begin{equation*}
    x_N(t):=\frac{\cos(2\pi Nt)-1}{2\pi N};
\end{equation*}
\begin{equation*}
    y_N(t):=\frac{\sin(2\pi Nt)}{2\pi N};
\end{equation*}
\begin{equation*}
    z_N(t):=\frac{2\pi Nt-\sin(2\pi Nt)}{8(\pi N)^2}.
\end{equation*}
The curve $\gamma_N$ is the horizontal lift of a circle of radius $\frac{1}{2\pi N}$ travelled $N$ times and is a normal geodesic parametrized by arc-length (see for example \cite[Section 1.4.1]{ledonne2010lecture} for a characterization of geodesics in the Heisemberg group). We have $\gamma_N(0)=1_G$ and $\gamma_N(1)=(0,0,\frac{1}{4\pi N})$. Therefore, for every $\epsilon>0$ there exists $N>0$ such that $\gamma_N(1)\in B(1_G,\epsilon)$. We showed that for every $\epsilon>0$ there exists a normal curve of length $1$ having end-point in $B(1_G,\epsilon)$, thus the proof of the proposition is concluded.
\end{proof}

\subsection{Optimality of the exponent in Theorem~\ref{t fugacptquant}}
In an arbitrary Carnot group we cannot obtain an estimate as the one in Theorem~\ref{t fugacptquant} with an exponent smaller than the reciprocal of the step. Indeed, for every $s\in\mathbb{N}$, we provide an example of a normal curve $\gamma$, in a filiform group of step $s$, for which the distance of $\gamma(t)$ from the origin is bounded above by a constant times $t^{\frac{1}{s}}$.
\begin{defi}
The \emph{subRiemannian filiform group of first type} is the Carnot group with stratified Lie algebra $\mathfrak{g}=V_1\oplus...\oplus V_s$ with basis $X_1,Y_1,...,Y_s$, 
$\{X_1,Y_1\}$ being an orthonormal basis of $V_1$, $V_j=\Span(Y_j)$ for $j=2,...,s$ and only non-trivial bracket relations $[X_1,Y_i]=Y_{i+1}$, for every $i=1,...,s-1$. 
\end{defi}
We recall that in the subRiemannian filiform groups of first type all length-minimizing curves are normal (see for example \cite[Proposition 4.1]{bryant1993rigidity}).

\begin{prop}
Let $G$ be the sub-Riemmanian filiform group of first type of step $s$. Let $Y_n$ be a vector spanning the $s$-th stratum and $\gamma:\mathbb{R}\rr G$ be a normal curve with $\gamma(0)=1_G$, $\gamma(1)=\exp(Y_n)$ and $\gamma|_{[0,1]}$ length-minimizing. Then there exists $C>0$ such that for every $t>1$ there holds
\begin{equation}
d(1,\gamma(t)) <Ct^{\frac{1}{s}}.   
\end{equation}
\label{p esempio filiforme}
\end{prop}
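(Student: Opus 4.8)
The plan is to show that $\gamma$ evaluated at positive integers lands exactly on the one-parameter subgroup $\exp(\mathbb{R}Y_s)$ of the top stratum, namely $\gamma(k)=\exp(kY_s)$ for every $k\in\mathbb{N}$ (where I write $Y_s$ for the central generator $Y_n$ of $V_s$ of the statement), and then to conclude by homogeneity of the distance. Indeed, since $Y_s\in V_s$ we have $\exp(kY_s)=\delta_{k^{1/s}}(\exp(Y_s))$, and because $\delta_r$ is a homothety of factor $r$ fixing $1_G$, this gives $d(1,\exp(kY_s))=k^{1/s}\,d(1,\exp(Y_s))$. Writing $L_0:=d(1,\exp(Y_s))$, which equals the length of the minimizer $\gamma|_{[0,1]}$, for $t\in[k,k+1)$ the triangle inequality together with $d(\gamma(k),\gamma(t))\le \Length(\gamma|_{[k,t]})\le L_0$ yields $d(1,\gamma(t))\le L_0\,(k^{1/s}+1)\le 2L_0\,t^{1/s}$ for $t\ge 1$, so that $C=2L_0$ works.

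Everything thus reduces to the self-reproducing identity $\gamma(t+1)=\exp(Y_s)\cdot\gamma(t)$, which upon iterating (and using that $Y_s$ is central, so $\exp(Y_s)^k=\exp(kY_s)$) gives $\gamma(k)=\exp(kY_s)$ and also shows that each unit interval of $\gamma$ has the same length $L_0$. By left-invariance of the defining equation $\dot\gamma=\df L_\gamma u$, this identity is equivalent to the control $u$ being $1$-periodic: if $u(t+1)=u(t)$, then $t\mapsto \exp(Y_s)^{-1}\gamma(t+1)$ solves the same Cauchy problem as $\gamma$, and uniqueness closes the argument.

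The heart of the matter, and the step I expect to be the main obstacle, is therefore the $1$-periodicity of the control. Here I would pass to the associated momentum $\mu(t):=\lambda\circ\Ad_{\gamma(t)}$, where $\lambda$ is a co-vector associated to $\gamma$ in the sense of Definition~\ref{d right-inv-covector}. On the one hand, the normal equation \eqref{e normalLie2} recovers the control from $\mu$: in the subRiemannian case $\partial_{u(t)}\E_1=\{u(t)^\flat\}$, so $\mu(t)|_{V_1}=u(t)^\flat$ and hence $u(t)=(\mu(t)|_{V_1})^\sharp$ depends linearly on $\mu(t)$. On the other hand, differentiating $\mu(t)=\Ad^*_{\gamma(t)}\lambda$ gives the Lie--Poisson equation $\dot\mu=\ad^*_{u(t)}\mu$; substituting $u(t)=(\mu(t)|_{V_1})^\sharp$ turns this into an autonomous ODE $\dot\mu=F(\mu)$ with $F$ a fixed quadratic, hence locally Lipschitz, vector field on $\mathfrak{g}^*$.

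It then remains to observe that $\mu$ returns to its starting value after time $1$. Since the only nontrivial brackets are $[X_1,Y_i]=Y_{i+1}$ with $i\le s-1$, the top generator $Y_s$ is central, so $\ad_{Y_s}=0$ and $\Ad_{\exp(Y_s)}=\exp(\ad_{Y_s})=\id$. Therefore $\mu(1)=\Ad^*_{\gamma(1)}\lambda=\Ad^*_{\exp(Y_s)}\lambda=\lambda=\mu(0)$. An autonomous flow with unique solutions whose trajectory satisfies $\mu(1)=\mu(0)$ is $1$-periodic, so $\mu$, and with it $u=(\mu(t)|_{V_1})^\sharp$, is $1$-periodic. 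This delivers the self-reproducing identity and completes the plan. The delicate point to handle carefully is exactly this reduction to an autonomous flow together with the centrality observation $\mu(1)=\mu(0)$; once these are in place, the homogeneity estimate of the first paragraph is routine.
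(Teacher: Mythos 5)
Your proof is correct and follows essentially the same route as the paper: both hinge on the translation identity $\gamma(t+1)=\exp(Y_n)\gamma(t)$, obtained from the centrality of $Y_n$ (so that $\Ad_{\exp(Y_n)}=\id$) together with uniqueness for an autonomous ODE, and both conclude via the homogeneity $d(1,\exp(kY_n))=k^{\frac{1}{s}}d(1,\exp(Y_n))$, the triangle inequality, and the length-minimality of $\gamma|_{[0,1]}$. The only difference is packaging: you deduce the identity from the $1$-periodicity of the momentum $\mu(t)=\lambda\circ\Ad_{\gamma(t)}$ via the Lie--Poisson equation, whereas the paper verifies directly that $s\mapsto\exp(mY_n)\gamma(s)$ solves the same normal equation \eqref{e 2es2} with the same covector $\lambda$.
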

\begin{proof}
Let $\{X_1,X_2\}$ be an orthonormal frame of the first stratum. Let $\lambda$ be a 
co-vector associated to $\gamma$, see Definition \ref{d right-inv-covector}. Since the energy at the origin is differentiable with differential
\begin{equation*}
    \df(\E_{1})_w v=\langle w,v\rangle, \ \ \forall v,w\in T_1G,
\end{equation*}
equation (\ref{e normalLie2}) rewrites as
\begin{equation}
    \lambda(\Ad_{\gamma(t)}X_i)=\langle u(t),X_i\rangle, \ \ \forall t\in \mathbb{R}, \forall i\in\{1,2\}.
    \label{e 2es2}
\end{equation}
We claim that
\begin{equation}
    \gamma(m+s)=\exp(mY_n)\gamma(s), \ \ \forall m\in \mathbb{N}, \forall s\in\mathbb{R}.
    \label{e char56}
\end{equation}
Indeed, the curve $\exp(mY_n)\gamma(s) $ solves the normal Equation (\ref{e 2es2}) with respect to the covector $\lambda$:
\begin{equation}
   \begin{split}
        (\exp(mY_n)\gamma(s))'=\df L_{\exp(mY_n)}\gamma(s)'=\\ =\df L_{\exp(mY_n)}\sum_{i=1}^2\lambda(\Ad_{\gamma(s)}X_i)X_i(\gamma(s))=\\ =\sum_{i=1}^2\lambda(\Ad_{\exp(mY_n)\gamma(s)}X_i)X_i(\exp(mY_n)\gamma(s)),  
   \end{split}
\end{equation}
where in the second equality we used that $\gamma$ solves (\ref{e 2es2}) and the last equality comes from the fact that $\exp(mY_n)$ is in the center.\\
We have
\begin{equation}
d(1,\exp(mY_n))=d(1,\delta_{m^{\frac{1}{s}}}(\exp(Y_n))) =m^{\frac{1}{s}}d(1,\exp(Y_n)), \ \ \forall m\in \mathbb{N}.  
\label{e fromballbox}
\end{equation}
Equation (\ref{e fromballbox}) together with (\ref{e char56}) and the triangle inequality give us
\begin{equation}
    d(1,\gamma(m+s)) \leq m^{\frac{1}{s}}d(1,\exp(Y_n))+d(1,\gamma(s)), \ \ \forall m\in\mathbb{N}, \forall s\in \mathbb{R}.
\end{equation}
In particular, being $\gamma|_{[0,1]}$ length minimizing and $\gamma(1)=\exp(Y_n)$, we have
\begin{equation}
   \begin{split}
   d(1,\gamma(m+s)) \leq m^{\frac{1}{s}}d(1,\exp(Y_n))+d(1,\exp(Y_n))\leq\\ \leq d(1,\exp(Y_n))(m^{\frac{1}{s}}+1), \ \ \forall m\in \mathbb{N}, \forall s\in [0,1].
   \end{split}
   \label{e 3es2}
\end{equation}
Consequently, setting $C=2d(1,\exp(Y_n))$, we have for $m>1$ that
\begin{equation}
   d(1,\gamma(m+s)) <Cm^{\frac{1}{s}}<C(m+s)^{\frac{1}{s}}, \ \ \forall m\in\mathbb{N}, \forall s \in[0,1].
\end{equation}
This concludes the proof of the proposition.
\end{proof}

\bibliographystyle{plain}
\bibliography{bibliografia.bib}

\begin{thebibliography}{10}

\bibitem{agrachev2012introduction}
Andrei~A Agrachev, Davide Barilari, and Ugo Boscain.
\newblock Introduction to {R}iemannian and sub-{R}iemannian geometry.
\newblock 2012.

\bibitem{agrachev2013control}
Andrei~A Agrachev and Yuri Sachkov.
\newblock {\em Control theory from the geometric viewpoint}, volume~87.
\newblock Springer Science \& Business Media, 2013.

\bibitem{antonelli2022lipschitz}
Gioacchino Antonelli, Enrico Le~Donne, and Sebastiano Nicolussi~Golo.
\newblock Lipschitz {C}arnot-{C}arath{\'e}odory structures and their limits.
\newblock {\em Journal of Dynamical and Control Systems}, pages 1--50, 2022.

\bibitem{bellaiche1996tangent}
Andr{\'e} Bella{\"\i}che.
\newblock The tangent space in sub-{R}iemannian geometry.
\newblock In {\em Sub-Riemannian geometry}, pages 1--78. Springer, 1996.

\bibitem{https://doi.org/10.48550/arxiv.2203.16178}
Alejandro Bravo-Doddoli.
\newblock No periodic geodesics in jet space, 2022.

\bibitem{bryant1993rigidity}
Robert~L Bryant and Lucas Hsu.
\newblock Rigidity of integral curves of rank 2 distributions.
\newblock {\em Inventiones mathematicae}, 114(1):435--461, 1993.

\bibitem{coddington1955theory}
Earl~A Coddington and Norman Levinson.
\newblock {\em Theory of ordinary differential equations}.
\newblock Tata McGraw-Hill Education, 1955.

\bibitem{creutz2021rigidity}
Paul Creutz.
\newblock Rigidity of the {P}u inequality and quadratic isoperimetric constants
  of normed spaces.
\newblock {\em Revista Matem{\'a}tica Iberoamericana}, 38(3):705--729, 2021.

\bibitem{gromov1996carnot}
Mikhael Gromov.
\newblock Carnot-{C}arath{\'e}odory spaces seen from within.
\newblock In {\em Sub-Riemannian geometry}, pages 79--323. Springer, 1996.

\bibitem{Hakavuori-2020-step2_geodesics}
Eero Hakavuori.
\newblock Infinite {G}eodesics and {I}sometric {E}mbeddings in {C}arnot
  {G}roups of {S}tep 2.
\newblock {\em SIAM J. Control Optim.}, 58(1):447--461, 2020.

\bibitem{hodastronglyshortcutspaces}
Nima Hoda.
\newblock Strongly shortcut spaces, 2020.

\bibitem{ledonne2010lecture}
Enrico Le~Donne.
\newblock Lecture notes on sub-{R}iemannian geometry.
\newblock {\em preprint}, 2010.

\bibitem{le2015metric}
Enrico Le~Donne.
\newblock A metric characterization of {C}arnot groups.
\newblock {\em Proceedings of the American Mathematical Society},
  143(2):845--849, 2015.

\bibitem{le2018primer}
Enrico Le~Donne.
\newblock A primer on {C}arnot groups: homogenous groups,
  {C}arnot-{C}arath{\'e}odory spaces, and regularity of their isometries.
\newblock {\em Analysis and Geometry in Metric Spaces}, 5(1):116--137, 2018.

\bibitem{o1997existence}
Donal O'Regan.
\newblock {\em Existence theory for nonlinear ordinary differential equations},
  volume 398.
\newblock Springer Science \& Business Media, 1997.

\end{thebibliography}

\end{document}